\numberwithin{equation}{section}
\theoremstyle{plain}
\def\m{\mu}
\def\th{\theta}
\def\P{{\mathbb P}}
\def\Q{{\mathbb Q}}
\def\m{\mu}
\def\th{\theta}
\def\P{{\mathbb P}}
\def\s{\sigma}
\def\R{\mathbb R}
\def\dd{\Delta}
\def\d{\delta}
\def\f{\phi}
\def\IK{I\!\!K}
\newtheorem{theorem}{Theorem}[section]
\newtheorem{lemma}{Lemma}[section]
\newtheorem{remark}{Remark}[section]
\newtheorem{example}{Example}[section]
\newtheorem{definition}{Definition}[section]
\newtheorem{conjecture}{Conjecture}[section]
\begin{document}
\begin{frontmatter}
\title{Nonparametric Estimation in Uniform Deconvolution and Interval Censoring}
\runtitle{Uniform deconvolution}

\begin{aug}
\author{\fnms{Piet} \snm{Groeneboom}\corref{}\ead[label=e1]{P.Groeneboom@tudelft.nl}}
\and
\author{\fnms{Geurt} \snm{Jongbloed}\corref{}\ead[label=e2]{G.Jongbloed@tudelft.nl}}
\runauthor{Piet Groeneboom and Geurt Jongbloed}
\address{Delft Institute of Applied Mathematics, Mekelweg 4, 2628 CD Delft,
	The Netherlands.\\ 
	\printead{e1,e2}} 
\end{aug}

\begin{abstract}
In the uniform deconvolution problem one is interested in estimating the distribution function $F_0$ of a nonnegative random variable, based on a sample with additive uniform noise. A peculiar and not well understood phenomenon of the nonparametric maximum likelihood estimator in this setting is the dichotomy between the situations where $F_0(1)=1$ and $F_0(1)<1$. If $F_0(1)=1$, the MLE can be computed in a straightforward way and its asymptotic pointwise behavior can be derived using the connection to the so-called current status problem. However, if $F_0(1)<1$, one needs an iterative procedure to compute it and the asymptotic pointwise behavior of the nonparametric maximum likelihood estimator is not known. In this paper we describe the problem, connect it to interval censoring problems and a more general model studied in \cite{piet_EJS:24} to state two competing naturally occurring conjectures for the case $F_0(1)<1$. Asymptotic arguments related to smooth functional theory and extensive simulations lead us to to bet on one of these two conjectures.
\end{abstract} 

\begin{keyword}[class=AMS]
\kwd[Primary ]{62G05}
\kwd{62N01}
\kwd[; secondary ]{62-04}
\end{keyword}

\begin{keyword}
\kwd{deconvolution}
\kwd{current status}
\kwd{interval censoring}
\kwd{nonparametric maximum likelihood}
\kwd{incubation time model}
\kwd{asymptotic distribution}
\end{keyword}

\end{frontmatter}
\section{Introduction}
\label{sec:intro}
Consider a random variable $U$, distributed according to a distribution function $F_0$ on $\R^+$. Instead of observing $U$, one observes the sum
$$
S=U+V 
$$
where $V\sim$Unif$(0,1)$, independent of $U$. The random variable $S$ then has convolution density
\begin{align}
\label{eq:directrelcur}
g_S(s)=\int 1_{[0,1]}(s-v)\,dF_0(v)=\int 1_{[s-1,s]}(v)\,dF_0(v)=F_0(s)-F_0(s-1).
\end{align}
Statistical inference for $F_0$ based on $n$ i.i.d.\ random variables $S_1,\ldots,S_n$ with density $g_S$, is known as the uniform deconvolution problem. It is not essential that the support of the uniform distribution is $[0,1]$, we can always reduce the deconvolution problem with uniform random variables with another support to this situation. For simplicity we stick in this paper to uniform random variables with support $[0,1]$.  As discussed in \cite{GroJoUnifDec:03}, an interesting application of uniform deconvolution is in the ``deblurring" of pictures, blurred by Poisson noise, see, e.g., \cite{kingshuk:01}.

The log likelihood of $F$ is then defined as
\begin{align}
\label{loglike1}
\ell(F)=\sum_{i=1}^n\log\left\{F(S_i)-F(S_i-1)\right\}.
\end{align}
If $F_0$ satisfies $F_0(1)=1$, this model will be seen to be equivalent to the so-called current status (or interval censoring case 1) model. Consequently, there is  a one step algorithm to compute the nonparametric MLE of $F_0$. We explain this further in Section \ref{subsec:support_in_[0,1]}. We also derive the asymptotic distribution of the nonparametric MLE in this section.

In section \ref{subsec:support_not_in_[0,1]}, we show that if the support of the distribution corresponding to the distribution function $F_0$ is not contained in $[0,1]$, with upper support point $M>1$, the model can be shown to be equivalent to an interval censoring, case $m$, model, where $m=\lceil M\rceil$. 
Consequently, a one step algorithm to compute the nonparametric MLE is not known, but one can use iterative algorithms in this case to compute the nonparametric MLE, for example the iterative convex minorant algorithm, proposed in \cite{GrWe:92}.

A more general model is studied in \cite{piet_EJS:24} and \cite{piet_Statistica:24}. There the length of the support of the uniform random variable $V$, say $E$, is actually random (but observable).
This model is used for estimating the distribution of the incubation time of a disease. In this interpretation there is a positive random variable $E$, the duration of the ``exposure time", with (unknown) distribution function $F_E$. Given $E$, an independent ``infection time" $V$ is drawn, uniformly distributed on the interval $[0,E]$. Moreover, the quantity of interest is the incubation (length) time $U$ with distribution function $F_0$. Apart from $E$, the observation is then the time $S$ for getting symptomatic, where $S=U+V$. Here, the random variables $U$ and $V$ are independent, conditionally on $E$.  The observations therefore consist of $n$ i.i.d.\ pairs of exposure times and times of getting symptomatic
\begin{align}
\label{observations_incub}
(E_i,S_i),\qquad i=1,\dots,n.
\end{align}
This model is also considered in \cite{backer:20},  \cite{tom_gianpi:19}, \cite{piet:21} and \cite{reich:09}.

Under the assumption that $E$ has an absolutely continuous distribution, the asymptotic pointwise distribution of the nonparametric MLE of $F_0$ is derived in \cite{piet_EJS:24}. Remarkably, for this result there is no distinction between different assumptions on the support of the distribution corresponding to $F_0$, it holds in general.  Taking for the distribution of $E$ the point mass at $1$ in the statement of Theorem 4.1 in \cite{piet_EJS:24}, a case certainly not covered by the assumption of absolute continuity in this theorem and coinciding with the uniform deconvolution problem stated above, the correct asymptotic distribution appears in case $F_0(1)=1$ as derived in our section \ref{subsec:support_in_[0,1]}. This also provides a natural conjecture (which we do, however, not believe in) for the asymptotic distribution of the nonparametric MLE in the situation where $F_0(1)<1$. In section  \ref{sec:mixedmodel} we discuss this more general model, used in estimating the incubation time distribution, in more detail and formulate the conjecture.

Before turning to a simulation study and asymptotic considerations to support our actual conjecture in section \ref{sec:asymptandsimul},  in section \ref{sec:examples} we digress in a discussion of the estimation of other functionals of $F_0$, rather than the evaluation of the nonparametric MLE itself at a fixed point. For these so-called smooth functionals, asymptotic results are known. Moreover, the idea behind these smooth functionals is needed to understand the asymptotic considerations leading up to our final conjecture for the asymptotic distribution of the nonparametric MLE of $F_0(t_0)$ in the uniform deconvolution setting.

\section{Uniform deconvolution and interval censoring}
\label{sec:asymptotic_uniform_deconvolution}
In this section, we establish a relation between the uniform deconvolution problem and various interval censoring models. In those models, also a dichotomy between two cases arises naturally. The uniform deconvolution problem in case $F_0(1)=1$ will be seen to be related to the interval censoring case 1 (or, current status) problem. This situation will be discussed in section  \ref{subsec:support_in_[0,1]}, where also the asymptotic distribution of the nonparametric MLE of $F_0(t_0)$ is given. In section \ref{subsec:support_not_in_[0,1]}, we discuss the relation between the uniform deconvolution model where $F_0(1)<1$ and interval censoring case $m$, for $m\ge 2$.

We first briefly describe the  interval censoring case 1 (IC-$1$) problem. Consider a random variable $X$ with distribution function $F_0$ of interest and independently of this a random variable $Y$  with a distribution function $G$. Instead of observing $X$, the pair $(Y,\Delta)$ is observed, where $\Delta$ indicates whether $X$ is smaller than or equal to $Y$. So, $\Delta=1_{\{X\le Y\}}$. In survival analysis terminology, $X$ is the event time, $Y$ is the inspection time and we observe the ``status'' of the subject at time $Y$. The status being whether the event (usually an infection) has already occured at time $Y$ or not. Note that given inspection time $Y=y$, $\Delta\sim$Bernoulli$(F_0(y))$. The problem is to estimate  distribution function $F_0$, based on an i.i.d.\ sample of $(Y_i,\Delta_i)$'s.

A more general model is the interval censoring case $m$ (IC-$m$) model, where $m$ stands for the number of distinct inspection times per subject. Formally, $Y=(Y_1,\ldots,Y_m)$ is a random vector of distinct inspection times and $X$ a nonnegative random variable independent of these event times. One then observes $(Y,\Delta)$, where the $m+1$-vector $\Delta$ indicates  which of the intervals defined by the vector $Y$ contains $X$: $\Delta=(\Delta_1,\ldots,\Delta_{m+1})$. Here for $1\le j\le m+1$, $\Delta_j=1_{(Y_{(j-1)},Y_{(j)}]}(X)$, denoting by $Y_{(j)}$ the $j$-th order statistic in the vector $Y$,  and by convention $Y_0\equiv 0$ and  $Y_{(m+1)}=\infty$. Note that given the (ordered) vector of observation times equals $(y_{(1)},\ldots,y_{(m)})$, the vector $\Delta$ is multinomially distributed with parameters $1$ and probability vector
$$
(F_0(y_{(1)}),F_0(y_{(2)})-F_0(y_{(1)}),\ldots,F_0(y_{(m)})-F_0(y_{(m-1)}),1-F_0(y_{(m)})).
$$
Given an i.i.d. sample of vectors $(Y,\Delta)$, the problem is then to estimate the distribution function $F_0$.

In the two subsections below, we establish the relation between the uniform deconvolution problem and the interval censoring models and derive results based on that, depending on whether $F_0(1)=1$ or $F_0(1)<1$.

\subsection{Case $F_0(1)=1$}
\label{subsec:support_in_[0,1]}
Following the suggestions of Exercise 2, p.\,61, of \cite{GrWe:92}, we
set up an equivalence of the interval censoring problem with the current status model, in case $F_0(1)=1$. Given the observed $S_1,S_2,\ldots,S_n$, define
\begin{align}
\label{delta}
\dd_i=\left\{\begin{array}{ll}
1 &,\text{ if }S_i\le1\\
0&,\text{ if }S_i>1
\end{array}
\right.
\end{align}
and let the random variable $Y_i$ be defined by:
\begin{align}
\label{def_Y_i}
Y_i=\left\{\begin{array}{ll}
S_i, &,\text{ if }\dd_i=1\\
S_i-1&,\text{ if }\dd_i=0,
\end{array}
\right.
\end{align}
translating the $S_i$'s to pairs $(Y_i,\Delta_i)$ for $1\le i\le n$.
Note that, for $y\in(0,1)$, considering a single observation (e.g. $i=1$), we get
\begin{align*}
\P(Y\le y)&=\P(Y\le y \wedge \Delta=0)+\P(Y\le y \wedge \Delta=1)\\
&=\P(S-1\le y \wedge \Delta=0)+\P(S\le y \wedge \Delta=1)
=\P(1<S\le y+1)+\P(S\le y)\\
&=\int_1^{1+y}g_S(s)\,ds+\int_0^{y}g_S(s)\,ds=\int_0^yF_0(s+1)\,ds=y.
\end{align*}
Here we use (\ref{eq:directrelcur}) and the fact that $F_0(s+1)=1$ if $s>0$, as $F_0(1)=1$.
Hence, the $Y_i$'s are uniformly distributed on $[0,1]$.

Moreover, again using (\ref{eq:directrelcur}), for $y\in(0,1)$, the conditional distribution of $\Delta$ given $Y=y$ is Bernoulli, with success parameter
\begin{align*}
\P\left\{\dd=1|Y=y\right\}=\frac{g_S(y)}{g_S(y)+g_S(y+1)}=F_0(y).
\end{align*}

This shows the equivalence of the uniform deconvolution model with the current status model in the case $F_0(1)=1$. More specifically, it is a current status model with event time distrubution $F_0$ and inspection time distribution uniform on $[0,1]$. This means that we can immediately characterize the nonparametric MLE in this setting and also derive its pointwise asymptotic distribution. Indeed, the MLE is the maximizer of the log likelihood
\begin{align*}
\sum_{i=1}^n\left(\dd_i\log F(Y_i)+(1-\dd_i)\log(1-F(Y_i))\right),
\end{align*}
where $\dd_i$ and $Y_i$ are defined by (\ref{delta}) and (\ref{def_Y_i}) as functions of the observations $S_i$. Being a special instance of a `generalized isotonic regression problem', the MLE $\hat F_n$ is known to coincide with the solution of the isotonic regression problem of minimizing the sum of squares
\begin{align*}
\sum_{i=1}^n\left(\dd_i- F(Y_i)\right)^2,
\end{align*}
over all distribution functions $F$. The one step algorithm for computing the solution, using the so-called cusum diagram of the $\dd_i$'s is described, e.g., on p. 30 of \cite{piet_geurt:14}. The solution consists of local averages of the $\dd_i$ and can therefore only have rational values.

Specializing Theorem 3.7 in Section 3.8 of \cite{piet_geurt:14} to the setting where the inspection times are uniformly distributed on $[0,1]$, yields the asymptotic distribution of the MLE in the uniform deconvolution model with $F_0(1)=1$.

\begin{theorem}
\label{th:local_limit}
Let $F_0$ be differentiable on $(a,b),\,0<a<b<1$ with a continuous positive derivative $f_0(t)$ for $t\in(a,b)$, where $[a,b]$ is the support of $f_0$. Let $\hat F_n$ be the nonparametric MLE of $F_0$. Then, for $t_0\in(a,b)$:
\begin{align}
\label{local_limit_result}
n^{1/3}\{\hat F_n(t_0)-F_0(t_0)\}/\left(4f_0(t_0)F_0(t_0)(1-F_0(t_0))\}\right)^{1/3}\stackrel{d}\longrightarrow \text{\rm argmin}_{t\in\R}\left\{W(t)+t^2\right\},
\end{align}
where $W$ is two-sided Brownian motion on $\R$, originating from zero.
\end{theorem}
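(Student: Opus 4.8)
The plan is to exploit the equivalence just established. The transformation $S_i\mapsto(Y_i,\Delta_i)$ of \eqref{delta}--\eqref{def_Y_i} turns the uniform deconvolution sample with $F_0(1)=1$ into an i.i.d.\ sample from a current status (IC-$1$) model whose event-time distribution is exactly $F_0$ and whose inspection-time distribution is uniform on $[0,1]$. Since the two log likelihoods coincide under this bijection, the nonparametric MLE $\hat F_n$ of the deconvolution problem \emph{is} the current status MLE of the transformed sample. It therefore suffices to quote the known pointwise asymptotic theory for the current status MLE and to track the constants in this particular design.

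Next I would recall the general limit theorem for the current status MLE, namely Theorem 3.7 of \cite{piet_geurt:14}: if $F_0$ has a continuous positive derivative $f_0$ at an interior point $t_0$ and the inspection density $g$ is positive and continuous at $t_0$, then
\begin{align*}
n^{1/3}\{\hat F_n(t_0)-F_0(t_0)\}\stackrel{d}\longrightarrow \left(\frac{4\,f_0(t_0)\,F_0(t_0)\,(1-F_0(t_0))}{g(t_0)}\right)^{1/3}\text{\rm argmin}_{t\in\R}\{W(t)+t^2\}.
\end{align*}
The cube-root scaling constant is assembled from three local quantities at $t_0$: the Bernoulli variance $F_0(t_0)(1-F_0(t_0))$ of the status indicator $\Delta$, the slope $f_0(t_0)$ of the event distribution, and the inspection density $g(t_0)$.

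The third step is to verify the hypotheses and substitute. By the computation preceding the theorem, the $Y_i$ are uniform on $[0,1]$, so $g\equiv 1$ on $(0,1)$ and in particular $g(t_0)=1$ at the interior point $t_0\in(a,b)\subset(0,1)$, where $g$ is trivially positive and continuous; the remaining requirements—$f_0$ continuous and positive at $t_0$—are exactly the standing assumptions of the theorem, and the constraint $0<a<b<1$ keeps $t_0$ away from the boundary of the inspection support. Inserting $g(t_0)=1$ collapses the general constant to $\left(4\,f_0(t_0)\,F_0(t_0)\,(1-F_0(t_0))\right)^{1/3}$, which is precisely \eqref{local_limit_result}.

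The genuine work lies inside the cited Theorem 3.7 rather than in the specialization: there one characterizes the MLE as the left derivative of the greatest convex minorant of a cumulative-sum (cusum) diagram of the $\Delta_i$, rescales the cusum process locally around $t_0$ at rate $n^{1/3}$, shows that after centering it converges to a process of the form (variance factor)$\,W(t)+$(curvature factor)$\,t^2$, and then invokes a continuous-mapping/switching argument together with Brownian scaling to identify the limit of the rescaled MLE as the location of the minimum of $W(t)+t^2$. The main point to get right—and the only place where the uniform design enters—is the bookkeeping of the scaling constant: confirming that the drift curvature contributes $f_0(t_0)$, the local variance contributes the Bernoulli factor $F_0(t_0)(1-F_0(t_0))$, and the design density contributes $g(t_0)=1$, so that the cube-root normalization in \eqref{local_limit_result} is exact.
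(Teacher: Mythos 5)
Your proposal is correct and follows exactly the route the paper takes: use the bijection \eqref{delta}--\eqref{def_Y_i} to identify the problem with a current status model having event-time distribution $F_0$ and uniform inspection times on $[0,1]$, then specialize Theorem 3.7 of \cite{piet_geurt:14} with inspection density $g(t_0)=1$ to obtain the constant in \eqref{local_limit_result}. The additional sketch of the cusum/switching argument inside the cited theorem is accurate but not needed beyond the citation.
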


\subsection{Case $F_0(1)<1$}
\label{subsec:support_not_in_[0,1]}
In line with the previous subsection, we will make a connection of the uniform deconvolution problem to the more general IC-$m$ problem, where $m=\lceil M\rceil$, $M$ denoting the upper support point of the distribution corresponding to distribution function $F_0$. 

We start again by constructing a vector $\dd$ and $Y$ based on the observation $S$. Note that $S$ now takes values in $[0,M+1]$. Define for $1\le j\le m+1$ the $j$-th component in $\Delta$ by
\begin{align}
\label{deltavec}
\dd_j=\left\{\begin{array}{ll}
1 &,\text{ if }S\in(j-1,j]\\
0&,\text{ else. }
\end{array}
\right.
\end{align}
Moreover, define the components of the vector $Y$ by
\begin{align}
\label{def_Yvec_j}
Y_j=
S-\lfloor S\rfloor+j-1
\end{align}
for $1\le j\le m+1$.

Similarly to the IC-$1$ setting, we can show that  $Y_1$ is uniformly distributed on $[0,1]$. Indeed, for $y\in(0,1)$,
\begin{align*}
\P(Y_1\le y)&=\sum_{j=1}^{m+1}\P(Y_1\le y \wedge \Delta_j=1)=\sum_{j=1}^{m+1}\P(Y_1\le y \wedge j-1<S\le j)\\
&=\sum_{j=1}^{m+1}\P(S-(j-1)\le y \wedge j-1<S\le j)=\sum_{j=1}^{m+1}\P(j-1<S\le y+j-1)\\
&=\sum_{j=1}^{m+1}\int_{j-1}^{j-1+y}g_S(s)\,ds=\sum_{j=1}^{m+1}\int_{j-1}^{j-1+y}F_0(s)-F_0(s-1)\,ds=\int_{m}^{m+y}F_0(s)\,ds
\end{align*}
Again, the right hand side equals $y$ because $F_0(s)=1$ for $s\ge m$, as $m\ge M$ and $F_0(M)=1$. So, the vector $Y$ consists of a uniformly distributed random variable $Y_1$ on $[0,1]$, followed by $Y_j=Y_1+j-1$ for $2\le j\le m$. It is therefore also automatically ordered.

Now, consider the conditional distribution of $\Delta$ given $Y$, which only depends on $Y_1$. For fixed $j\in\{1,2,\ldots,m+1\}$ and $y_1\in(0,1)$, using (\ref{eq:directrelcur})
\begin{align*}
\P(\Delta_j=1|Y_1=y_1)&=\frac{g_S(j-1+y_1)}{\sum_{k=1}^{m+1}g_S(k-1+y_1)}\\&=\frac{F_0(j-1+y_1)-F_0(j-2+y_1)}{F_0(m+y_1)-F_0(y_1-1)}=F_0(y_{(j)})-F_0(y_{(j-1)}),
\end{align*}
because $y_1-1<0$ and $m+y_1>m\ge M$.
In view of the previous discussion on the IC-$m$ model, this shows that translating the sample of $S_i$'s to the sample of $(Y_i,\Delta_i)$'s according to (\ref{deltavec}) and (\ref{def_Yvec_j}), yields IC-$m$ data. The distribution of `inspection times' is quite specific and peculiar, having distance exactly equal to one with a uniformly distributed first.

Having established this relation between the two models, results known for the IC-$m$ model can be used to derive results for our deconvolution model. However, much less results are known for the IC-$m$ model than for IC-$1$. One indication of the difficulties in going away from the IC-1 model  is that the nonparametric MLE can have irrational values in the IC-$m$, $m>1$, model. A simple example of this is given in Example 1.3 on p.\ 48 of \cite{GrWe:92}. Here the values of the nonparametric maximum likelihood estimate of the distribution function of the hidden variable contain the values $\tfrac12\pm\tfrac16\sqrt{3}$ (the solution can be computed analytically in this simple example). This cannot happen in the IC-$1$ model. 

Let us now turn to the nonparametric MLE. Define
\begin{align}
\label{def_m_n}
m_n=\max_{j:S_j>1}(S_j-1).
\end{align} 
Denoting by $\Q_n$  the empirical distribution of $(S_1,\dots,S_n)$, the nonparametric MLE is  defined as maximizer of the log likelihood function
\begin{align}
\label{emp_loglik}
\ell_n(F)&=\int\log\{F(s)-F(s-1)\}\,d\Q_n(s)\nonumber\\
&=\int_{s\le 1}\log F(s)\,d\Q_n(s)+\int_{1<s\le m_n}\log\{F(s)-F(s-1)\}\,d\Q_n(s)\nonumber\\
&\quad\qquad\quad\qquad\quad\qquad\quad\qquad+\int_{s>1\vee m_n}\log\{1-F(s-1)\}\,d\Q_n(s).
\end{align}
Note that for maximizing this log likelihood function, the class of distribution functions can be restricted to those satisfying $F(S_i)=1$ if $S_i>\max_{j:S_j>1}(S_j-1)$, and $F(S_i-1)=0$ if $S_i-1<\min_j S_j$, for different values of $F$ at these points would make the log likelihood smaller.
Define the set of distribution functions.
\begin{definition}
\label{def_cal_F_n}
{\rm
${\cal F}_n$ is the set of discrete distribution functions $F$, which only have mass at the points $\{S_i,S_i-1\,:\,1\le i\le n\}$  and satisfy 
\begin{align}
\label{upper_values}
F(x)=1\quad \text{\rm if }\quad x\ge m_n,
\end{align}
and
\begin{align}
\label{lower_values}
F(x)=0\quad \text{\rm if }\quad x<\min_jS_j.
\end{align}
}
\end{definition} 
We restrict the distribution functions, occurring in the problem of maximizing the likelihood to this set ${\cal F}_n$.

The nonparametric MLE can then be characterized using the following process, defined in terms of $F\in {\cal F}_n$
\begin{align}
\label{process_W}
&W_{n,F}(t)\nonumber\\
&=\int\frac{\{s\le t\wedge 1\}}{F(s)}\,d\Q_n(s)-\int\frac{\{0<s-1\le t<s\le m_n\}}{F(s)-F(s-1)}\,d\Q_n(s)\nonumber\\
&\quad+\int\frac{\{1<s\le t\wedge m_n\}}{F(s)-F(s-1)}\,d\Q_n(s)-\int\frac{\{1\vee m_n-1<s-1\le t\}}{1-F(s-1)}\,d\Q_n(s),\quad t\ge0,
\end{align}
where we write the set as shorthand for its indicator function.

\vspace{1cm}
Now, let $T_1< \dots < T_m$ be the points $S_i$ such that $S_i$ not of type (\ref{upper_values}) and $S_i-1$ is not of type (\ref{lower_values}). Then $0<F(T_i)<1$ for $i=1,\dots,m$, if the log likelihood for $F$ is finite, and we can define:
\begin{align*}
{\cal Y}=\{\bm y\in(0,1)^m:\bm y=(y_1,\dots,y_m)=(F(T_1),\dots,F(T_m),\,F\in{\cal F}_n\}.
\end{align*}

The following lemma characterizes the MLE and is proved in a completely analogous way as Lemma 2.2 in \cite{piet_EJS:24}. 
\begin{lemma}
\label{lemma:char_MLE}
Let the class of distribution functions ${\cal F}_n$ be as defined in Definition \ref{def_cal_F_n}. Then $\hat F_n\in{\cal F}_n$ maximizes (\ref{emp_loglik}) over $F\in{\cal F}_n$ if and only if
\begin{enumerate}
\item[(i)] 
\begin{align}
\label{fenchel1}
\int_{u\in[t,\infty)}\,dW_{n,\hat F_n}(u)\le0,\qquad t\ge0,
\end{align}
\item[(ii)]
\begin{align}
\label{fenchel2}
\int \hat F_n(t)\,dW_{n,\hat F_n}(t)=0.
\end{align}
\end{enumerate}
where $W_{n,F_n}$ is defined by (\ref{process_W}). Moreover, $\hat F_n\in{\cal F}_n$ is uniquely determined by (\ref{fenchel1}) and (\ref{fenchel2}). 
\end{lemma}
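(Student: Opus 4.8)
The plan is to treat the maximization of $\ell_n$ over ${\cal F}_n$ as a finite-dimensional concave optimization problem and to read off (i)--(ii) as its Fenchel (Karush--Kuhn--Tucker) optimality conditions, exactly in the spirit of the proof of Lemma 2.2 in \cite{piet_EJS:24}. First I would note that an $F\in{\cal F}_n$ with finite log likelihood is completely determined by its value vector $\bm y=(F(T_1),\dots,F(T_m))\in{\cal Y}\subset(0,1)^m$, and that on the convex set ${\cal Y}$ the map $\bm y\mapsto\ell_n(F)$ is concave: each integrand in (\ref{emp_loglik}) is the logarithm of one of the affine functions $F(s)$, $F(s)-F(s-1)$, $1-F(s-1)$ of $\bm y$, and $\log$ is concave. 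Since the constraints $F(x)=1$ for $x\ge m_n$ and $F(x)=0$ for $x<\min_jS_j$ are already built into ${\cal F}_n$, no separate multipliers are needed for them, and because $\ell_n\to-\infty$ as $\bm y$ approaches any face of ${\cal Y}$ on which one of these affine arguments vanishes, the maximizer lies in the interior of ${\cal Y}$, where all directional derivatives are finite. By concavity, $\hat F_n$ is a maximizer if and only if the one-sided directional derivative of $\ell_n$ at $\hat F_n$ is nonpositive in every feasible direction, i.e.\ $\left.\tfrac{d}{d\eps}\ell_n\bigl((1-\eps)\hat F_n+\eps F\bigr)\right|_{\eps=0^+}\le0$ for all $F\in{\cal F}_n$.

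The heart of the argument is then to express this first-order condition through the process $W_{n,\hat F_n}$. Differentiating the three integrals in (\ref{emp_loglik}) term by term, and using that $F(s-1)=\hat F_n(s-1)=0$ when $s\le1$ and $F(s)=\hat F_n(s)=1$ when $s>1\vee m_n$, the derivative decomposes into contributions from the regions $s\le1$, $1<s\le m_n$ and $s>1\vee m_n$; collecting these contributions as jumps located at the points $s$ and $s-1$ is what assembles the measure $dW_{n,\hat F_n}$ of (\ref{process_W}), the middle region being responsible for the coupled terms involving both $F(s)$ and $F(s-1)$. Having represented the derivative as a Stieltjes integral against $dW_{n,\hat F_n}$, I would invoke the partial-summation identity $\int F(t)\,dW_{n,\hat F_n}(t)=\int\overline W(x)\,dF(x)$ with $\overline W(x):=\int_{u\in[x,\infty)}dW_{n,\hat F_n}(u)$. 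Since $dF\ge0$ for every distribution function, requiring the derivative to be nonpositive for all competitors $F$ is equivalent to $\overline W(x)\le0$ for all $x$, which is (\ref{fenchel1}); feasibility of $\hat F_n$ itself makes the derivative in its own direction vanish, giving $\int\overline W\,d\hat F_n=\int\hat F_n\,dW_{n,\hat F_n}=0$, which is (\ref{fenchel2}) and, given $\overline W\le0$ and $d\hat F_n\ge0$, is precisely the complementary-slackness statement that $\hat F_n$ charges only $\{\overline W=0\}$. Conversely, (i) and (ii) together render the directional derivative nonpositive in every feasible direction, so by concavity $\hat F_n$ is a global maximizer.

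For uniqueness I would use the strict concavity of $\log$: if $F_1,F_2\in{\cal F}_n$ were both maximizers, then so is $\tfrac12(F_1+F_2)$, and equality in the concavity inequality forces the affine arguments $F(s)$, $F(s)-F(s-1)$, $1-F(s-1)$ to coincide for $F_1$ and $F_2$ at every observation, from which $F_1(T_i)=F_2(T_i)$ for all $i$ and hence $F_1=F_2$ on ${\cal F}_n$. I expect the main obstacle to be the second step: carrying out the term-by-term differentiation across the three regimes and verifying that the jumps assemble \emph{exactly} into the four terms of $W_{n,\hat F_n}$ in (\ref{process_W}), with correct treatment of the boundary conventions $F=1$ above $m_n$ and $F=0$ below $\min_jS_j$ and of the coincidences that arise when a point $S_i-1$ meets another observation. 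Once this representation is established, the passage to (i)--(ii) via the Fubini identity and complementary slackness is routine and parallels \cite{piet_EJS:24}.
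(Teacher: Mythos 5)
Your overall route --- finite-dimensional concave maximization over the convex set of value vectors, directional derivatives assembled into the process $W_{n,\hat F_n}$, summation by parts, and complementary slackness --- is exactly the route the paper intends: it gives no proof of its own and simply declares the argument ``completely analogous'' to Lemma 2.2 of \cite{piet_EJS:24}, which is this Fenchel/KKT argument. Two steps in your write-up, however, do not work as stated. First, your derivation of (ii): the ``derivative in the direction of $\hat F_n$ itself'' is the derivative of $\eps\mapsto\ell_n((1-\eps)\hat F_n+\eps\hat F_n)$, which is identically zero for every $F\in{\cal F}_n$, optimal or not; it yields $\int\hat F_n\,dW_{n,\hat F_n}-\int\hat F_n\,dW_{n,\hat F_n}=0$ and no information. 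What the one-sided conditions actually give is $\int F\,dW_{n,\hat F_n}\le c:=\int\hat F_n\,dW_{n,\hat F_n}$ for all $F$, hence (testing with point masses and using $\int F\,dW_{n,\hat F_n}=\int\overline W\,dF$ with $\overline W(x)=\int_{[x,\infty)}dW_{n,\hat F_n}$) that $\overline W\le c$ everywhere with $\hat F_n$ concentrated on $\{\overline W=c\}$. To land on (\ref{fenchel1})--(\ref{fenchel2}) you must additionally show $c=0$; the standard way is a two-sided perturbation at each jump point $\tau$ of $\hat F_n$ (mass at $\tau$ can be both increased and decreased within ${\cal F}_n$), which forces $\overline W(\tau)=0$ at every atom and hence $\int\overline W\,d\hat F_n=0$. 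This is a genuine missing step, not a routine one, because it is exactly where the normalization of $W_{n,F}$ in (\ref{process_W}) (the treatment of the regions $s\le1$ and $s>1\vee m_n$ via the conventions of Definition \ref{def_cal_F_n}) is used.

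Second, the uniqueness argument is incomplete. Strict concavity of $\log$ applied to the midpoint of two maximizers only forces equality of the \emph{arguments} of the logarithms, i.e.\ of the differences $F(S_i)-F(S_i-1)$ (and of $F(S_i)$ for $S_i\le1$, $1-F(S_i-1)$ for $S_i>1\vee m_n$); the likelihood never sees $F(T_i)$ in isolation, so ``the affine arguments coincide, hence $F_1(T_i)=F_2(T_i)$'' is a non sequitur as written. To conclude $F_1=F_2$ on ${\cal F}_n$ you must chain each support point down (or up) through the links $S_i\leftrightarrow S_i-1$ until you hit a point whose value is pinned to $0$ or $1$ by (\ref{upper_values})--(\ref{lower_values}), and verify that every $T_i$ is reachable in this way; this is where the restriction to ${\cal F}_n$, and the definition of the $T_i$ as the points not of boundary type, does real work. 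Both repairs are standard and parallel \cite{piet_EJS:24}, but neither is supplied by the proposal as it stands.
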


As mentioned above, in order to compute the MLE in this case,  an iterative algorithm has to be used. We use the iterative convex minorant algorithm proposed in \cite{GrWe:92}, using line search to determine the step size, as described in \cite{geurt:98}. This algorithm was also applied in computing the MLE for the incubation time distribution in \cite{piet:21}. {\tt R} scripts for computing the MLE in this way are available in \cite{Rscripts_GitHub:25}.  This algorithm uses the characterization of Lemma \ref{lemma:char_MLE}. 

The asymptotic distribution of the nonparametric MLE in the IC-$m$ model has only partially been established for the case 2 model in  the so-called separated case,  in which the distance between observation times stays away from zero. However, its proof is given under the additional assumption that the distance between the observation times has an absolutely  continuous distribution, a situation we clearly do not have here.  For details, see \cite{piet:96}.
One conjecture for the asymptotic distribution of the MLE in the uniform deconvolution problem when $F_0(1)<1$ would be that the statement of Theorem \ref{th:local_limit} is also valid in this setting.
In the next section, we consider the more general mixed uniform deconvolution problem for which asymptotic results have been derived leading to a second conjecture for the asymptotic distribution in the general fixed uniform deconvolution problem.

\section{The mixed model}
\label{sec:mixedmodel}
The {\it mixed uniform deconvolution model} as described in the introduction (so with the random length $E$ of the support of the uniform noise $V$) is clearly a generalization of the what we from now will call the {\it fixed uniform deconvolution model}. The first reducing to the latter in case  $E$ has the degenerate distribution at the point $1$. Let  $M_1$ and $M_2$ be the upper support points of the distributions corresponding to $E$ and $U$ respectively, so that $M=M_1+M_2$ is the upper support point of the distribution of $S=U+V$. Denoting by $\m$ the product of the measure $dF_E$ and Lebesgue measure on $[0,M]$, $(E_i,S_i)$
has (convolution) density $q_F$ with respect to $\m$
\begin{align}
\label{convolution}
q_F(e,s)&=e^{-1}\{F(s)-F(s-e)\}\nonumber\\
&=e^{-1}\int_{u=(s-e)_+}^s \,dF(u),\qquad e>0,\,s\in[0,M].
\end{align}
We define the underlying measure $Q_0$ for $(E_i,S_i)$ by
\begin{align}
\label{def_Q_0}
dQ_0(e,s)=q_F(e,s)\,ds\,\,dF_E(e),\qquad s\in[0,M],\qquad e\in(0,M_2].
\end{align}

For estimating the distribution function $F_0$ of $U$, usually parametric distributions are used, like the Weibull, log-normal or gamma distribution. However, in \cite{piet:21} the nonparametric MLE is used. This maximum likelihood estimator $\hat F_n$ maximizes the function
\begin{align}
\label{loglikelihood}
\ell(F)=n^{-1}\sum_{i=1}^n\log\left(F(S_i)-F(S_i-E_i)\right)
\end{align}
over {\it all} distribution functions $F$ on $\R$ which satisfy $F(x)=0$, $x\le 0$, see \cite{piet:21}. The monotonicity and boundedness of $F$ (between $0$ and $1$) ensures that this maximization problem has a solution.
Note that the random variable $E_i$ now replaces the value $1$ in $F(S_i-1)$ compared to the fixed uniform deconvolution problem, but that for the rest the model is the same.

The asymptotic distribution of the nonparametric MLE in the mixed uniform deconvolution problem is given as Theorem 4.1 in \cite{piet_EJS:24}.
\begin{theorem}
\label{th:asympmixed} 
Let the conditions of Theorem 4.1 in \cite{piet_EJS:24} be satisfied. In particular, let $E_i$ stay away from zero and have an absolutely continuous distribution. Then
\begin{align}
\label{local_limit_resultmix}
n^{1/3}\{\hat F_n(t_0)-F_0(t_0)\}/(4f_0(t_0)/c_E)^{1/3}\stackrel{d}\longrightarrow \text{\rm argmin}\left\{W(t)+t^2\right\},
\end{align}
where $W$ is two-sided Brownian motion on $\R$, originating from zero, and where the constant $c_E$ is given by:
\begin{align}
\label{c_E}
c_E=\int e^{-1}\left[\frac1{F_0(t_0)-F_0(t_0-e)}
+\frac1{F_0(t_0+e)-F_0(t_0)}\right]\,dF_E(e).
\end{align}
\end{theorem}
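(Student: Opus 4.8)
The plan is to derive (\ref{local_limit_resultmix}) by the now-standard machinery for cube-root asymptotics of nonparametric MLEs in interval-censoring-type models, which goes back to \cite{GrWe:92} and is carried out for this mixed model in \cite{piet_EJS:24}. First I would record the Fenchel characterization of $\hat F_n$, the exact analogue of Lemma \ref{lemma:char_MLE}: writing a process $W_{n,F}$ built from the density $q_F$ of (\ref{convolution}) in the same way that (\ref{process_W}) is built in the fixed model, the maximizer of (\ref{loglikelihood}) is characterized by $\int_{[t,\infty)}dW_{n,\hat F_n}(u)\le 0$ for all $t\ge 0$, with equality at points of increase of $\hat F_n$. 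This is the object that will be localized. The assumption that $E$ stays away from $0$ is used here and throughout to keep the denominators $F_0(s)-F_0(s-e)$ bounded away from $0$ on a neighbourhood of $t_0$, so that the integrands in $W_{n,F}$ and their derivatives with respect to $F$ are uniformly controlled.

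Next I would establish the local rate $\hat F_n(t_0)-F_0(t_0)=O_p(n^{-1/3})$ by a modulus-of-continuity argument. The class of candidate distribution functions is uniformly bounded and monotone, hence has bracketing entropy of order $1/\epsilon$; feeding this into the standard rate theorem for M-estimators (peeling over shells on which $\hat F_n$ deviates from $F_0$ by a dyadic amount and bounding the increments of the centred empirical criterion) yields the $n^{1/3}$ rate, the curvature coming from the fact that the population log-likelihood $F\mapsto\int\log q_F\,dQ_0$ has a nondegenerate quadratic behaviour at $F_0$ with second-order coefficient governed by $c_E$ and $f_0(t_0)$. With the rate in hand, the self-referential process $W_{n,\hat F_n}$ can be linearized around $F_0$, the error from replacing $\hat F_n$ by $F_0$ in the (smooth, bounded-away-from-zero) denominators being $o_p$ of the leading term on the $n^{-1/3}$ scale.

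I would then localize via the \emph{switch relation}. For $a$ near $F_0(t_0)$, the event $\{\hat F_n(t_0)\ge a\}$ is equivalent, through the monotonicity of $\hat F_n$ and the characterization above, to a statement about the location of the maximizer of a localized version of $W_{n,F}$ evaluated at level $a$. Rescaling space as $t=t_0+n^{-1/3}s$ and the level as $a=F_0(t_0)+n^{-1/3}h$, the localized process converges weakly, uniformly on compacta, to a Gaussian process with parabolic drift $\sigma W(s)+\beta s^2$. The Brownian term arises from a central limit theorem for the sum of the (locally relevant) score contributions of the observations whose interval $(S_i-E_i,S_i]$ has an endpoint near $t_0$; its local variance is exactly $c_E$, the two summands in (\ref{c_E}) corresponding to $t_0$ being the upper endpoint of an interval of length $e$ (mass $F_0(t_0)-F_0(t_0-e)$) and the lower endpoint of such an interval (mass $F_0(t_0+e)-F_0(t_0)$), averaged over $dF_E$. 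The drift coefficient $\beta$ comes from the second-order expansion of the population criterion and is proportional to $f_0(t_0)$. Absolute continuity of $E$ is precisely what makes this localized sum asymptotically Gaussian rather than degenerate, the feature that is absent in the fixed model. Applying the argmax continuous mapping theorem (as in \cite{piet_geurt:14}) then gives that $n^{1/3}\{\hat F_n(t_0)-F_0(t_0)\}$ converges to $\argmin_s\{\sigma W(s)+\beta s^2\}$, and the Brownian scaling identity $\argmin_s\{\sigma W(s)+\beta s^2\}\stackrel{d}{=}(\sigma/\beta)^{2/3}\argmin_s\{W(s)+s^2\}$ collects $\sigma$ and $\beta$ into the normalising constant $(4f_0(t_0)/c_E)^{1/3}$ of (\ref{local_limit_resultmix}).

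The main obstacle I anticipate is not any single step in isolation but the control of the \emph{self-referential} process $W_{n,\hat F_n}$: because $\hat F_n$ enters the denominators, the localization and the weak-convergence step must be interwoven with the rate argument, showing uniformly over the relevant local window that substituting $F_0$ for $\hat F_n$ and linearizing is legitimate. Making this tightness-plus-substitution argument rigorous, while keeping the denominators bounded away from zero, is the technical heart of the proof and the place where the hypotheses that $E$ stays away from $0$ and is absolutely continuous are indispensable.
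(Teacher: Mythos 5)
Your outline reproduces the standard cube-root machinery (Fenchel characterization, $n^{-1/3}$ rate, localization and switch relation, argmax continuous mapping), and that skeleton does match the proof of Theorem 4.1 in \cite{piet_EJS:24}, which the present paper does not reprove but only cites. However, you have missed the one step that is actually hard in this model, and you have misattributed the role of the absolute continuity of $E$. When the localized characterization $W_{n,\hat F_n}(t_0+n^{-1/3}t)-W_{n,\hat F_n}(t_0)$ is expanded (see Lemmas \ref{lemma:BM_part} and \ref{lemma:W_n-expansion} for the fixed-model analogue), one obtains, besides the martingale part $X_n$ (which converges to $\sqrt{c}\,W$ after scaling by $n^{2/3}$ in \emph{both} the fixed and the mixed model, so the local CLT is not where absolute continuity enters) and the drift part $A_n$ driven by $\hat F_n(s)-F_0(s)$ on the local window, a third term of the form (\ref{B_n(t)_mixed}), involving the deviations $\hat F_n(s\pm e)-F_0(s\pm e)$ at points bounded away from $t_0$. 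Pointwise these deviations are $O_p(n^{-1/3})$, so integrating over a window of length $n^{-1/3}|t|$ gives a priori $O_p(n^{-2/3})$ --- exactly the order of the terms you keep. Your proposal silently discards this contribution by asserting that the drift comes only from the second-order expansion of the population criterion; nothing in your rate or entropy argument justifies that.

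The actual content of the hypothesis that $E$ is absolutely continuous and stays away from zero is that the integration against $dF_E(e)$ in (\ref{B_n(t)_mixed}) turns these nonlocal deviations into a \emph{smooth functional} of $\hat F_n$, estimable at rate $n^{-1/2}$, which after integration over the local window yields $O_p(n^{-5/6})=o_p(n^{-2/3})$. Establishing this is, as the paper states, the main effort of \cite{piet_EJS:24}, and it requires the score-equation and efficient-influence-function machinery of Section \ref{sec:examples} and the Appendix, not just bracketing entropy and peeling. Indeed, the whole point of the present paper is that in the fixed model ($E\equiv1$) this term remains of order $O_p(n^{-2/3})$ and the limit presumably changes; an argument that does not isolate and kill (\ref{B_n(t)_mixed}) would ``prove'' the constant (\ref{eq:cE1}) in the fixed model as well, which the simulations contradict. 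You need to add this step explicitly; the rest of your outline is sound.
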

This result is valid, irrespective of whether the support of distribution corresponding to $F_0$ is contained in $[0,1]$ or not.

Here we see a condition for that result, namely that $E$ has an absolutely continuous distribution, a condition clearly not met by the degenerate distribution at $1$. Ignoring this for the moment, the statement of the theorem would yield the asymptotic distribution with variance 
\begin{align*}
\left(4f_0(t_0)/c_E\right)^{2/3}\text{Var}\left(\text{\rm argmin}_{t\in\R}\left\{W(t)+t^2\right\}\right),
\end{align*}
where,
\begin{align}
c_E&=\int e^{-1}\left[\frac1{F_0(t_0)-F_0(t_0-e)}
+\frac1{F_0(t_0+e)-F_0(t_0)}\right]\,dF_E(e)\nonumber\\
\label{eq:cE1}
&=\frac1{F_0(t_0)-F_0(t_0-1)}+\frac1{F_0(t_0+1)-F_0(t_0)}.
\end{align}
Now, if $F_0(1)=1$, this expression (for $t_0\in(0,1)$) reduces to
\begin{align}
\label{eq:cE2}
c_E=\frac1{F_0(t_0)}+\frac1{1-F_0(t_0)}=\frac1{F_0(t_0)(1-F_0(t))}.
\end{align}
This is precisely the constant in the asymptotic variance of Theorem \ref{th:local_limit}, proved in the case $F_0(1)=1$. Now two natural conjectures come up. The first is that also for the setting where $F_0(1)<1$  for $c_E$ will equal (\ref{eq:cE2}). The second being that it equals (\ref{eq:cE1}).  In Section \ref{sec:asymptandsimul} we will further investigate this asymptotic variance, using a simulation study as well as  asymptotic calculations involving so-called smooth functionals. In section \ref{sec:examples}, we discuss asymptotic results for this type of functionals.

\section{Interlude; examples of applications of smooth functional theory in the fixed model}
\label{sec:examples}
Though estimating the distribution function at a fixed point in the uniform deconvolution function is intrinsically more complicated than based on a direct sample from the unknown distribution, reflected in a slower rate of estimation than the `parametric rate' $\sqrt{n}$, there are functionals of $F_0$ that can be estimated at rate $\sqrt{n}$ in the uniform deconvolution model.
For the functionals discussed in this section, we have known (normal) asymptotic behavior of the functions of the nonparametric MLE. The theory does not depend on whether the support of the distribution, corresponding to $F_0$, is contained in $[0,1]$ or not.  
We illustrate the theory  with $F_0$ the truncated exponential distribution function on $[0,2]$, given by
\begin{align}
\label{truncated_exp}
F_0(x)=\frac{1-\exp\{-x\}}{1-\exp\{-2\}}1_{[0,2]}(x)+1_{(2,\infty)}(x).
\end{align}
Note that in this case the support of $f_0$ is not contained in $[0,1]$.

By ``smooth functionals" we mean both global smooth functionals such as the ``mean functional" and local smooth functionals such as smooth approximations of the distribution function and density, which can be estimated by applying kernel smoothing to the nonparametric MLE. These estimates are asymptotically normal. For deriving the asymptotic behavior, we establish a representation in the ``observation space'' of the functional in the ``hidden space'', using the score function. The setup is briefly described in the appendix (section \ref{subsection:score_equations}). For a more detailed discussion of the smooth functional theory, see \cite{piet_geurt:14}.

\begin{example}
\label{example:mean}
{\rm
Using the theory of Section \ref{section:Appendix} for the functional $F\mapsto\int x\,dF(x)$, we get from Lemma \ref{lemma:solution_score_eq} for the score function $\th_F$ on $[0,3]$:
\begin{align}
\label{score_function_F}
\th_F(x)=\left\{\begin{array}{ll}
-\{1-F(x)\}-\{1-F(x+1)\},\qquad &x\in[0,1],\\
F(x-1)-\{1-F(x)\}, &x\in(1,2],\\
F(x-2)+F(x-1), &x\in(2,3].\\
\end{array}
\right.
\end{align}
For $F=F_0$ as in (\ref{truncated_exp}), this becomes:
\begin{align}
\label{score_function_truncated_exp}
\th_F(x)=\left\{\begin{array}{ll}
\bigl(2 e^x-e (1+e)\bigr)/\bigl((e^2-1) e^x\bigr),\qquad\qquad\qquad    &x\in[0,1],\\
\bigl((1+e^2) e^x-e^2 (1+e)\bigr)/\bigl((e^2-1) e^x\bigr), &x\in(1,2],\\
(2 e^2-(1+e) e^{3-x})/(e^2-1), &x\in(2,3].\\
\end{array}
\right.
\end{align}
A picture of this function is given in Figure \ref{figure:truncated_exp_score}.

\begin{figure}[!ht]
\centering
\includegraphics[width=0.5\textwidth]{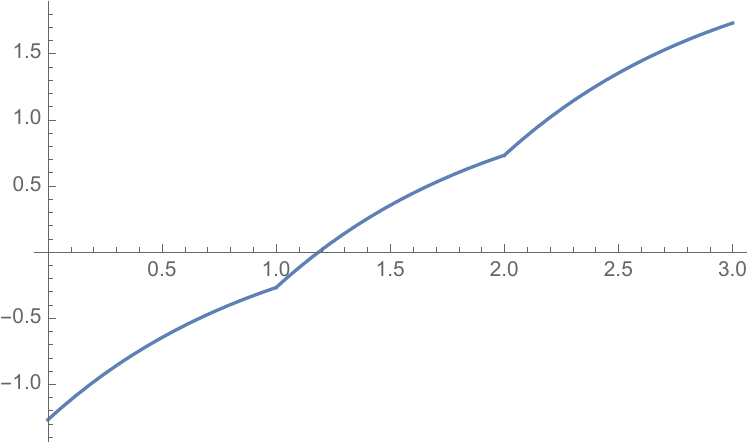}
\caption{The score function $\th_{F_0}$ for the truncated exponential distribution, given by (\ref{score_function_truncated_exp}).}
\label{figure:truncated_exp_score}
\end{figure}
The asymptotic variance of $\sqrt{n}\int x\,d\hat F_n(x)$ is given by:
\begin{align*}
\s_{F_0}^2=\int_0^3\th_{F_0}(x)^2\{F_0(x)-F_0(x-1)\}\,dx\approx0.357915,
\end{align*}
and we have:
\begin{align*}
\sqrt{n}\left\{\int x\,d\hat F_n(x)-\int x\,dF_0(x)\right\}\stackrel{{\cal D}}\longrightarrow N(0,\s_{F_0}^2),
\end{align*}
where $N(0,\s_{F_0}^2)$ is a normal distribution with expectation zero and variance $\s_{F_0}^2$.
One can in fact consistently estimate the asymptotic variance $\s_{F_0}^2$ by
\begin{align*}
\int\th_{\hat F_n}(x)^2\{\hat F_n(x)-\hat F_n(x-1)\}\,dx,
\end{align*}
where one takes $F=\hat F_n$ in (\ref{score_function_F}). Note that the rate of convergence is $\sqrt{n}$ instead of the cube root $n$ rate, expected for $\hat F_n(t)$ itself.

The asymptotic efficiency of this estimate of the first moment is proved in  \cite{geer:00}, see in particular example 11.2.3e on p.\ 230 of \cite{geer:00}. It beats obvious moment estimates like $\bar S_n-1/2$, because of the particular form of the characteristic function of the Uniform distribution (having zeroes). But these matters are not the main concern of our paper.
}
\end{example}

\begin{example}
\label{example:density}
{\rm
We can estimate the density $f_0$ by the density estimator
\begin{align*}
\hat f_{nh}(t)=\int K_h(t-x)\,d\hat F_n(x)
\end{align*}
where $h>0$ is a bandwidth and
$K_h(u)=h^{-1}K(u/h)$ for a symmetric smooth kernel $K$, for example the triweight kernel
\begin{align*}
K(u)=\tfrac{35}{32}(1-u^2)^31_{[-1,1]}(u).
\end{align*}
This estimator was considered in \cite{GroJoUnifDec:03}. It was shown that, under the condition given there:
\begin{align*}
\bigl(nh^3\bigr)^{1/2}\left\{\hat f_{nh}(t)-\int K_h(t-x)\,dF_0(x)\right\}\stackrel{{\cal D}}\longrightarrow N(0,\s_t^2),
\end{align*}
where
\begin{align}
\label{sigma_squared}
\s_t^2=\lim_{h\downarrow0}h^3\int\th_{h,t,F_0}^2(s)\{F_0(s)-F_0(s-1)\}\,ds,
\end{align}
and $\th_{h,t,F_0}$ is given by
\begin{align*}
\th_{h,t,F_0}(x)=\sum_{i=0}^{m-1}\{1-F_0(x+i)\}K_h'(t-(x+i)),\qquad x\in[0,1].
\end{align*}
Here $\th_{h,t,F_0}(x+i)=\th_{h,t,F_0}(x+i-1)-K_h'(t-(x+i-1))$, $i=1\dots,m$ and $m=\lceil M\rceil$, where $M$ is the upper bound of the support of $f_0$.

We show that $\s_t^2$, given by (\ref{sigma_squared}) can be simplified to
\begin{align}
\label{sigma_squared2}
\s_t^2=F_0(t)\{1-F_0(t)\}\int K'(u)^2\,du.
\end{align}
We have, for $t\in(0,1)$:
\begin{align*}
&\int\th_{h,t,F_0}^2(x)\{F_0(x)-F_0(x-1)\}\,dx\\
&=\int_{x=t-h}^{t+h} K_h'(t-x)^2\{1-F_0(x)\}^2F_0(x)\,dx\\
&\qquad+\int_{x=t-h}^{t+h} K_h'(t-x)^2F_0(x)^2\{F_0(x+1)-F_0(x)\}\,dx\\
&\qquad+\dots\\
&\qquad+\int_{x=t-h}^{t+h} K_h'(t-x)^2F_0(x)^2\{1-F_0(x+m-1)\}\,dx\\
&=\int_{x=t-h}^{t+h} K_h'(t-x)^2\{1-F_0(x)\}^2F_0(x)\,dx\\
&\qquad\qquad+\int_{x=t-h}^{t+h} K_h'(t-x)^2F_0(x)^2\{1-F_0(x)\}\,dx\\
&=\int_{x=t-h}^{t+h} K_h'(t-x)^2F_0(x)\{1-F_0(x)\}\,dx\\
&\sim h^{-3}F_0(t)\{1-F_0(t)\}\int K'(u)^2\,du,\qquad h\downarrow0,
\end{align*}
using the telescoping sum $F_0(x+1)-F_0(x)+\dots+1-F_0(x+m-1)$.

For $t\in(i,i+1)$, $i\ge1$ we get a similar argument and for $t=i$, $i=1,2,\dots$ we get the result from the contributions from both sides of $i$, which each contribute one half to the total mass.
}
\end{example}

\begin{example}
\label{example:df}
{\rm
In an entirely similar way, the asymptotic distribution of the estimator
\begin{align*}
\tilde F_{nh}(t)=\int\IK_h(t-x)\,d\hat F_n(x)
\end{align*}
of the distribution function $F_0$ itself can be derived, where $\IK_h(u)=\IK(u/h)$ and $\IK$ is the integrated kernel $K$, defined by
\begin{align*}
\IK(u)=\int_{-\infty}^u K(x)\,dx.
\end{align*}
We have the following result.

\begin{theorem}
\label{theorem:limit_df}
Let $F_0$ have a continuous positive density $f_0$ on $(0,M)$, where $M>0$, and let  $t_0\in(0,M)$. Then, if $nh\to\infty$ and $h\downarrow0$:
\begin{align*}
\bigl(nh\bigr)^{1/2}\left\{\tilde F_{nh}(t)-\int \IK_h(t-x)\,dF_0(x)\right\}\stackrel{{\cal D}}\longrightarrow N(0,\tilde\s_t^2),
\end{align*}
where
\begin{align*}
\tilde\s_t^2 =F_0(t)\{1-F_0(t)\}\int K(u)^2\,du.
\end{align*}
\end{theorem}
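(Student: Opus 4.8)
The plan is to follow the proof of the density result in Example~\ref{example:density} essentially verbatim, since it rests on the same smooth functional machinery summarized in Section~\ref{subsection:score_equations} and in \cite{piet_geurt:14}. The only structural change is that the functional $F\mapsto\int\IK_h(t-x)\,dF(x)$ uses the integrated kernel $\IK_h$ in place of $K_h$, with $\IK_h'=K_h$. First I would determine the associated score function via Lemma~\ref{lemma:solution_score_eq}. Repeating the derivation that produced $\th_{h,t,F_0}$ in Example~\ref{example:density}, but with $\IK_h$ replacing $K_h$, gives on $x\in[0,1]$
\begin{align*}
\tilde\th_{h,t,F_0}(x)=\sum_{i=0}^{m-1}\{1-F_0(x+i)\}K_h(t-(x+i)),
\end{align*}
extended by the recursion $\tilde\th_{h,t,F_0}(x+i)=\tilde\th_{h,t,F_0}(x+i-1)-K_h(t-(x+i-1))$ for $i=1,\dots,m$; this is exactly the score of Example~\ref{example:density} with $K_h'$ replaced by $K_h$.

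Next I would establish the asymptotic linear (efficient-influence-function) expansion
\begin{align*}
(nh)^{1/2}\left\{\tilde F_{nh}(t)-\int\IK_h(t-x)\,dF_0(x)\right\}=(nh)^{1/2}\int\tilde\th_{h,t,F_0}\,d(\Q_n-Q_0)+o_P(1),
\end{align*}
where $\Q_n$ is the empirical measure of $S_1,\dots,S_n$ and $Q_0$ the true law. Showing that the remainder is $o_P(1)$ is the heart of the argument: one uses the characterization of $\hat F_n$ in Lemma~\ref{lemma:char_MLE} to trade the plug-in functional for the score integral, controlling the difference by a global $L_2(Q_0)$ rate of convergence of the MLE in the observation space together with the boundedness and compact support of $K$, exactly as in the density case. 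Because the statement centers at the smoothed true distribution function $\int\IK_h(t-x)\,dF_0(x)$ rather than at $F_0(t)$, no deterministic bias term survives, and the only conditions needed are $nh\to\infty$ and $h\downarrow0$.

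The leading term is an average of the triangular array $\tilde\th_{h,t,F_0}(S_i)$. Since $K$ is bounded with compact support, $\|\tilde\th_{h,t,F_0}\|_\infty=O(h^{-1})$, and a short Lyapunov computation bounds the third-moment ratio by a multiple of $(nh)^{-1/2}\to0$, so a triangular-array central limit theorem applies. Its variance equals $h\,\text{Var}_{Q_0}(\tilde\th_{h,t,F_0}(S))+o(1)$; because $\int\tilde\th_{h,t,F_0}\,dQ_0=O(1)$ the squared-mean term contributes only $O(h)$, and one is left with
\begin{align*}
\tilde\s_t^2=\lim_{h\downarrow0}h\int\tilde\th_{h,t,F_0}^2(s)\{F_0(s)-F_0(s-1)\}\,ds.
\end{align*}

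Finally I would evaluate this limit by the telescoping-sum computation of Example~\ref{example:density}. Writing the integral over the intervals $(j,j+1)$ with $s=x+j$ and using the recursion, the compact support of $K_h$ localizes everything near $s=t$, and collecting the $m+1$ contributions the integrand sums, via
\begin{align*}
\{1-F_0(x)\}^2F_0(x)+F_0(x)^2\{F_0(x+1)-F_0(x)\}+\dots+F_0(x)^2\{1-F_0(x+m-1)\}=F_0(x)\{1-F_0(x)\},
\end{align*}
to $\int_{t-h}^{t+h}K_h(t-x)^2F_0(x)\{1-F_0(x)\}\,dx\sim h^{-1}F_0(t)\{1-F_0(t)\}\int K(u)^2\,du$. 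Multiplying by $h$ yields $\tilde\s_t^2=F_0(t)\{1-F_0(t)\}\int K(u)^2\,du$. The case $t\in(i,i+1)$ is identical, and for integer $t=i$ the two half-supports of $K_h$ each supply half the mass, as in the density example. The sole genuine obstacle is the $o_P(1)$ control of the remainder in the linearization; the score identification and the variance computation are routine transcriptions of the density case.
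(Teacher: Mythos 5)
Your proposal is correct and follows exactly the route the paper intends: the paper's entire proof of Theorem \ref{theorem:limit_df} is the single remark that it ``proceeds along a similar path as the proof of the analogous result for the density in the preceding example,'' and your transcription --- replacing $K_h'$ by $K_h$ in the score function, so that the variance scales as $h^{-1}\int K(u)^2\,du$ instead of $h^{-3}\int K'(u)^2\,du$, with the same telescoping-sum identity producing the factor $F_0(t)\{1-F_0(t)\}$ --- is precisely that path, worked out in more detail than the paper itself provides.
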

The proof proceeds along a similar path as the proof of the analogous result for the density in the preceding example. The truncated exponential distribution function, defined by (\ref{truncated_exp}) satisfies the conditions of the theorem, for $M=2$.

}
\end{example}

\section{Simulations and asymptotic considerations}
\label{sec:asymptandsimul}
Having two competing conjectures for the asymptotic distribution of $\hat{F}_n(t_0)$ in the fixed uniform deconvolution problem, related to the constants (\ref{eq:cE1})  and (\ref{eq:cE2}) in the asymptotic variance, in this section we will first present a simulation study supporting  (\ref{eq:cE2}). One could say that ``plugging in'' the point mass at $1$ as distribution for $E$ should be possible in the mixed case setting in case $F_0(1)=1$. In the case $F_0(1)<1$, however, this does not seem to lead to the right result. Diving into the proof of Theorem 4.1 in \cite{piet_EJS:24}, we identify a term that in case $F_0(1)=1$ is identically zero. But if $F_0(1)<1$ this term is not zero and has order  $O_p(n^{-5/6})$ in the mixed model, but order $O_p(n^{-2/3})$ in the fixed model, which means that it is negligible in the mixed model but not negligible in the asymptotic expansion for the fixed model.

To simulate the mixed model, we first generate the random pairs
\begin{align*}
(U_i,E_i),\qquad U_i\sim F_0,\qquad E_i\sim F_E=\left(\cdot-0.5\right)1_{[0.5,1.5)}+1_{[1.5,\infty)},\qquad i=1,\dots,n.
\end{align*}
Then, conditionally on $E_i$ a Uniform$(0,E_i)$ random variable $V_i$ is drawn. Our observations then consist of
\begin{align*}
(E_i,S_i),\,\,\mbox{ where }\,\,S_i=U_i+V_i,\qquad i=1,\dots,n.
\end{align*}
This example satisfies the conditions of Theorem 4.1 in \cite{piet_EJS:24}, which means that the asymptotic variance of the nonparametric maximum likelihood estimator $\hat F_n$ of $F_0$ evaluated at $t_0\in(0,2)$ is given by:
\begin{align}
\label{sigma_t}
\s_{t_0}^2=\left(4f_0(t_0)/c_E\right)^{-2/3}\text{var}\left(\text{\rm argmin}_{t\in\R}\left\{W(t)+t^2\right\}\right),
\end{align}
where $c_E$ is given by (\ref{c_E}).
It was computed numerically in \cite{piet_jon:01}, using Airy functions, that
\begin{align*}
\text{var}\left(\text{\rm argmin}_{t\in\R}\left\{W(t)+t^2\right\}\right)\approx 0.263555964.
\end{align*}

To illustrate Theorem 4.1 in \cite{piet_EJS:24} we can simulate from the model and compute the variances of $\hat F_n(t_i)$ for, say $10,000$, samples for $t_i=i\cdot 0.1\,$, $i=1,\dots,19$ and compare the variances of $\hat F_n(t_i)$ with the asymptotic variances $\s^2_{t_i}$, taking $t_0=t_i$ in (\ref{sigma_t}). A comparison of simulated and asymptotic values is shown in Figure \ref{figure:variances_incub}, where we take $n=10,000$, and  $F_0$ the truncated standard exponential and the uniform distribution function on $[0,2]$, respectively.

\begin{figure}[!ht]
 		\begin{subfigure}[b]{0.4\textwidth}
 			\includegraphics[width=\textwidth]{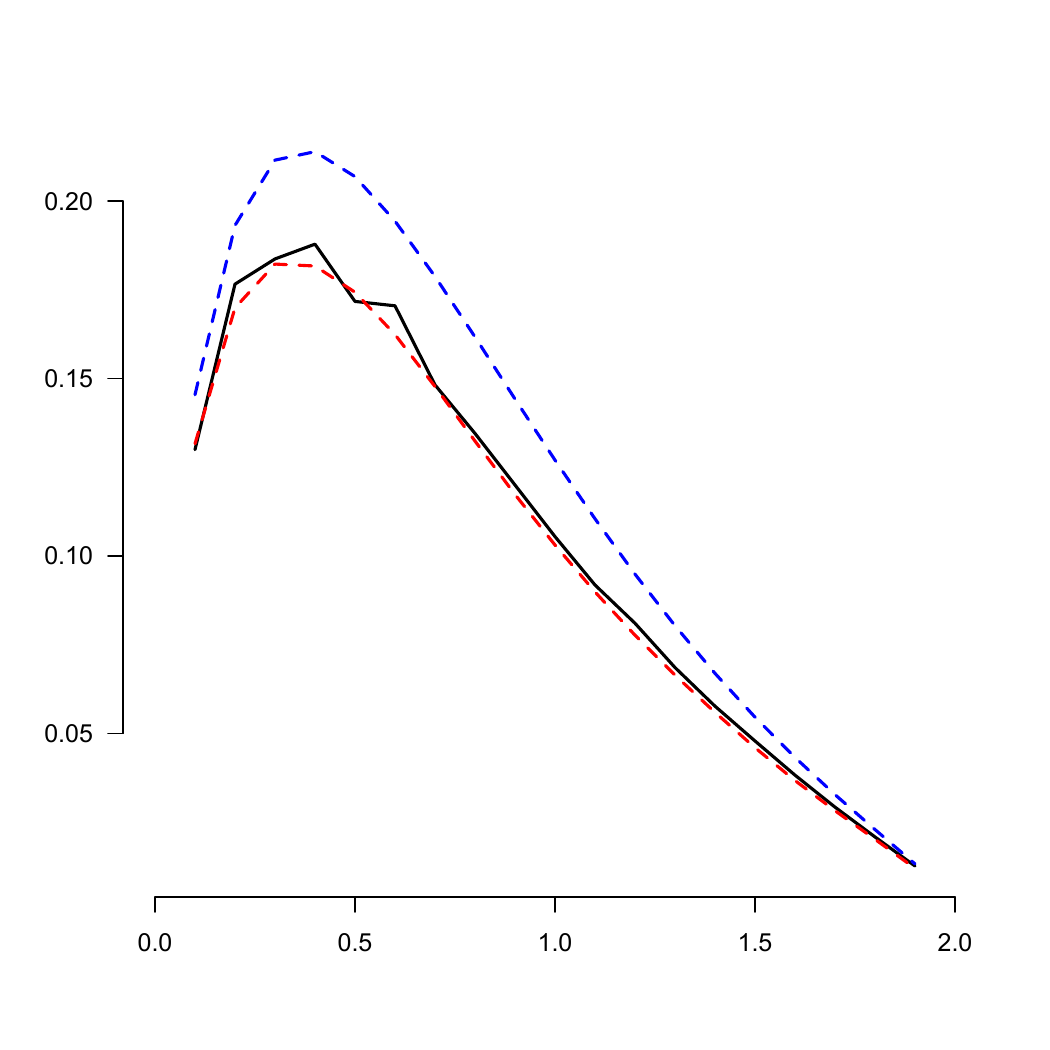}
 			\caption{}
 			\label{fig:posterior_estimate}
 		\end{subfigure}
 		\begin{subfigure}[b]{0.4\textwidth}
 			\includegraphics[width=\textwidth]{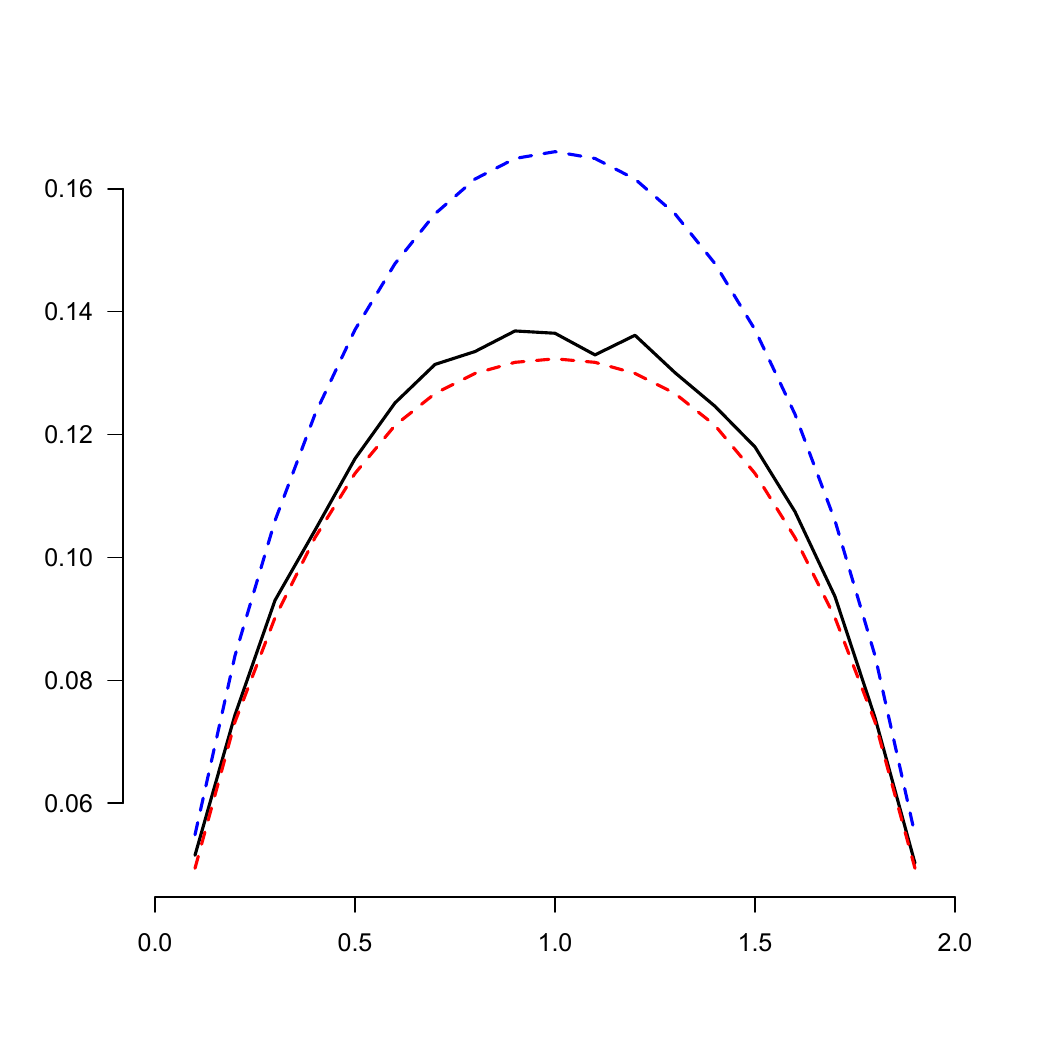}
 			\caption{}
 			\label{fig:isotonic_projection_posterior}
 		\end{subfigure}
 		\caption{Simulated variances, times $n^{2/3}$, for the mixed model of $\hat F_n(t_i)$, for $t_i=0.1,0.2,\dots,1.9$ (blue solid curve, linearly interpolated between values at the $t_i$), compared with the asymptotic values (\ref{sigma_t}) of Theorem 4.1 in \cite{piet_EJS:24} (red, dashed) for $E_i$ uniform on $[0.5,1.5]$. The simulated variances are based on $10,000$ simulations of samples of size $n=10,000$ for (a) $F_0$ the truncated exponential distribution function on $[0,2]$ and (b) $F_0$ the uniform distribution function on $[0,2]$. The blue dashed curves are the corresponding asymptotic variance curves of Conjecture \ref{conjecture:limit_df} for the fixed model.}
\label{figure:variances_incub}
 	\end{figure}

If $M>1$, as in this example where $M=2$, the values of the asymptotic variances are no longer of the form (\ref{sigma_t}) in the fixed model. We shall now explain the reason of this discrepancy.
We start with the characterization of Lemma \ref{lemma:char_MLE}. The MLE is determined by the process $W_{\hat F_n}$ and, just as in \cite{piet_EJS:24}, we now investigate the terms in a local expansion of $W_{\hat F_n}$. To this end, we first define the process $X_n$.

\begin{lemma}
\label{lemma:BM_part}
Let $F_0$ have a continuous positive density $f_0$ on $(0,M)$, where $M>1$, let  $t_0\in(0,M)$ and let the process $X_n$ be defined by:
\begin{align}
\label{def_X_n}
X_n(t)&=\int_{t_0<s\le t_0+n^{-1/3}t}\frac{\d_1(s)}{\hat F_n(s)}\,d(\Q_n-Q_0)(s)\nonumber\\
&\qquad\qquad-\int_{t_0<s-1\le t_0+n^{-1/3}t}\frac{\d_2(s)}{\hat F_n(s)-\hat F_n(s-1)}\,d(\Q_n-Q_0)(s)\nonumber\\
&\qquad\qquad+\int_{t_0<s\le t_0+n^{-1/3}t}\frac{\d_2(s)}{\hat F_n(s)-\hat F_n(s-1)}\,d(\Q_n-Q_0)(s)\nonumber\\
&\qquad\qquad-\int_{t_0<s-1\le t_0+n^{-1/3}t}\frac{\d_3(s)}{1-\hat F_n(s-1)}\,d(\Q_n-Q_0)(s),
\end{align}
where $\d_1=1_{[0,1]}$, $\d_2=1_{(1,m_n]}$ and $\d_3=1_{(m_n,\infty)}$ and $\Q_n$ is the empirical distribution measure of the $S_i$ with corresponding underlying measure $Q_0$. 
Then $n^{2/3}X_n$ converges in distribution, in the Skorohod topology, to the process
\begin{align*}
t\mapsto \sqrt{c}W(t),\qquad t\in\R,
\end{align*}
where $c$ is defined by
\begin{align*}
c=\frac1{F_0(t_0)-F_0(t_0-1)}+\frac1{F_0(t_0+1)-F_0(t_0)}\,.
\end{align*}
\end{lemma}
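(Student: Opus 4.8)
The plan is to prove the convergence in three movements: (i) localize each of the four integrals to a shrinking $s$-window, (ii) replace the random denominators built from $\hat F_n$ by their deterministic limits, and (iii) apply a functional central limit theorem for the resulting local empirical process. The structural observation that drives everything is that the integrands are supported, for large $n$, on two \emph{disjoint} windows: the first and third integrals live on $(t_0,t_0+n^{-1/3}t]$ (only one being nonzero near $t_0$, according as $t_0<1$ or $t_0>1$, since $\d_1$ and $\d_2$ are disjoint indicators), while the second and fourth live on $(t_0+1,t_0+1+n^{-1/3}t]$ through the constraint on $s-1$ (again exactly one nonzero near $t_0+1$, according as $t_0<M-1$ or $t_0>M-1$). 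Because these two windows are separated by a fixed distance close to one, the corresponding increments of $\Q_n-Q_0$ are asymptotically independent, so the limiting variances simply add; this is precisely what produces the two-term constant $c$. Throughout, for $t<0$ the integral over $(t_0,t_0+n^{-1/3}t]$ is read with the usual signed orientation, which is what yields a \emph{two-sided} Brownian motion on $\R$.

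First I would carry out the replacement of the random denominators. By uniform consistency of the nonparametric MLE (valid here via the IC-$m$ equivalence established above), $\hat F_n\to F_0$ uniformly, so on the window around $t_0$ one has $\hat F_n(s)\to F_0(t_0)$ and $\hat F_n(s)-\hat F_n(s-1)\to F_0(t_0)-F_0(t_0-1)$, while on the window around $t_0+1$ one has $\hat F_n(s)-\hat F_n(s-1)\to F_0(t_0+1)-F_0(t_0)$ and $1-\hat F_n(s-1)\to 1-F_0(t_0)$. Writing $1/\hat F_n=1/F_0+(1/\hat F_n-1/F_0)$ and similarly for the other kernels, the main terms carry deterministic integrands and the remainder is an integral of a uniformly small, monotone-type integrand against $d(\Q_n-Q_0)$ over a window of $Q_0$-measure $O(n^{-1/3})$. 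Such a remainder is $o_p(n^{-2/3})$ uniformly in $t$ on compacta, by the same maximal-inequality argument over the bounded-entropy class of monotone functions near $F_0$ used in the proof of Theorem 4.1 of \cite{piet_EJS:24}. This is the step I expect to be the main obstacle, and it is also where the random threshold $m_n$ must be controlled, using $m_n\to M$ to fix the indicators $\d_2,\d_3$ on each window; the only delicate case is $t_0=M-1$, where the window around $t_0+1$ straddles $m_n$, but there the two relevant kernels agree in the limit since $F_0(M)=1$.

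After the replacement, $n^{2/3}X_n$ is a finite sum of terms of the form $n^{2/3}\int_{\text{window}}\phi(s)\,d(\Q_n-Q_0)(s)$ with $\phi$ continuous and deterministic. For such a local empirical integral the variance equals $n^{1/3}\int_{\text{window}}\phi(s)^2 g_S(s)\,ds+o(1)$, which converges to $\phi(t_0)^2 g_S(t_0)\,|t|$ since the window has length $n^{-1/3}|t|$. On the window around $t_0$ the active kernel is $\phi=1/(F_0(t_0)-F_0(t_0-1))$ and $g_S(t_0)=F_0(t_0)-F_0(t_0-1)$, so the contribution is $g_S(t_0)/(F_0(t_0)-F_0(t_0-1))^2=1/(F_0(t_0)-F_0(t_0-1))$; the window around $t_0+1$ contributes $g_S(t_0+1)/(F_0(t_0+1)-F_0(t_0))^2=1/(F_0(t_0+1)-F_0(t_0))$ in exactly the same way. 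Summing gives variance $c\,|t|$, and cross-window covariances are $O(n^{-2/3})$ because the windows are disjoint, hence negligible. Finite-dimensional convergence to a centered Gaussian process with independent increments and variance $c\,|t|$ then follows from the Lindeberg--Feller central limit theorem applied to the triangular array of centered, bounded summands, while tightness in the Skorohod topology follows from the increment bound $\mathbb{E}[|n^{2/3}(X_n(t)-X_n(s))|^2]\le C|t-s|$ together with the standard moment criterion. Identifying this limit with $\sqrt{c}\,W$ completes the argument.
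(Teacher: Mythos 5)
Your sketch is correct and follows essentially the same route as the paper, which proves this lemma only by pointing to Lemma 9.4 of \cite{piet_EJS:24}: localize to the two disjoint windows around $t_0$ and $t_0+1$, replace the $\hat F_n$-denominators by their deterministic limits via consistency plus a maximal inequality over the monotone class, and apply a local CLT so that the two windows contribute the two summands of $c$. Only cosmetic points remain: the edge case $t_0=1$ deserves the same one-line treatment you give $t_0=M-1$ (the kernels of the first and third integrals agree in the limit there since $F_0(t_0-1)\to0$), and the cross-window covariance is in fact $O(n^{-5/3})$ before rescaling rather than $O(n^{-2/3})$, which only strengthens your negligibility claim.
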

\begin{proof}
The proof follows the proof of Lemma 9.4 in \cite{piet_EJS:24}.
\end{proof}

We now have:
\begin{lemma}
\label{lemma:W_n-expansion}
\begin{align*}
W_{n,\hat F_n}(t_0+n^{-1/3}t)-W_{n,\hat F_n}(t_0)=X_n(t)+Y_n(t),
\end{align*}
where $X_n$ is defined by (\ref{def_X_n}) and $Y_n$ by
\begin{align}
\label{def_Y_n2}
Y_n(t)=\int_{t_0<s\le t_0+n^{-1/3}t}\left\{\frac{F_0(s)-F_0(s-1)}{\hat F_n(s)-\hat F_n(s-1)}
-\frac{F_0(s+1)-F_0(s)}{\hat F_n(s+1)-\hat F_n(s)}\right\}\,ds.
\end{align}
We can write
\begin{align*}
Y_n(t)=A_n(t)+B_n(t),
\end{align*}
where
\begin{align*}
&A_n(t)\\
&=-\int_{s\in[t_0,t_0+n^{-1/3}t)}\bigl\{\hat F_n(s)-F_0(s)\bigr\}\left\{\frac{1}{\hat F_n(s)-\hat F_n(s-1)}+\frac{1}{\hat F_n(s+1)-\hat F_n(s)}\right\}\,ds,
\end{align*}
and
\begin{align}
\label{B_n(t)}
B_n(t)=\int_{s\in[t_0,t_0+n^{-1/3}t)}\left\{\frac{\hat F_n(s-1)-F_0(s-1)}{\hat F_n(s)-\hat F_n(s-1)}
+\frac{\hat F_n(s+1)-F_0(s+1)}{\hat F_n(s+1)-\hat F_n(s)}\right\}\,ds.
\end{align}
\end{lemma}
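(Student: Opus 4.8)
The plan is to prove the identity directly from the definition (\ref{process_W}) of $W_{n,\hat F_n}$, since the statement is a deterministic algebraic decomposition rather than a limit theorem; no stochastic estimates are needed at this stage.

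First I would evaluate $W_{n,\hat F_n}$ at the two arguments $t_0$ and $t_0+n^{-1/3}t$ and subtract, treating each of the four integrals in (\ref{process_W}) separately, so that taking the difference amounts to differencing the four indicator sets in the variable $t$. The first set $\{s\le t\wedge1\}$ and the third set $\{1<s\le t\wedge m_n\}$ are cumulative, so their differences localize to the thin shell $\{t_0<s\le t_0+n^{-1/3}t\}$, intersected with $\delta_1=1_{[0,1]}$ and $\delta_2=1_{(1,m_n]}$ respectively; the fourth set is cumulative in $s-1$ and localizes to $\{t_0<s-1\le t_0+n^{-1/3}t\}$ against $\delta_3=1_{(m_n,\infty)}$. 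The second, ``box'' set $\{0<s-1\le t<s\le m_n\}$ requires the most care, as its difference in $t$ contributes on both shells $\{t_0<s\le\cdot\}$ and $\{t_0<s-1\le\cdot\}$; the goal of this first step is to verify that, once combined with the third term, the four differences collapse to exactly the four localized integrands appearing in (\ref{def_X_n}), but integrated against $d\Q_n$ rather than $d(\Q_n-Q_0)$. Call this ``$d\Q_n$-version'' $\tilde X_n$.

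Second I would split the empirical measure, $d\Q_n=d(\Q_n-Q_0)+dQ_0$. By construction the $d(\Q_n-Q_0)$-part of $\tilde X_n$ is precisely $X_n$ as defined in (\ref{def_X_n}), so it remains to identify the $dQ_0$-part with $Y_n$. Writing $dQ_0(s)=\{F_0(s)-F_0(s-1)\}\,ds$ and changing variables $s\mapsto s-1$ in the two shifted integrals to bring all four pieces over the common shell $\{t_0<s\le t_0+n^{-1/3}t\}$, I would then collapse the $\delta_1,\delta_2,\delta_3$ cases using the conventions built into Definition \ref{def_cal_F_n}: $\hat F_n(x)=0$ for $x<\min_jS_j$ by (\ref{lower_values}) and $\hat F_n(x)=1$ for $x\ge m_n$ by (\ref{upper_values}), together with $F_0(s-1)=0$ for $s\le1$ and $F_0(s+1)=1$ for $s+1>M$. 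In the regime $s\le1$ the $\delta_1$-term becomes $\{F_0(s)-F_0(s-1)\}/\{\hat F_n(s)-\hat F_n(s-1)\}$, in the regime $1<s\le m_n$ the $\delta_2$-term gives the same expression, while the $\delta_2(s+1)$- and $\delta_3(s+1)$-pieces combine (using $\hat F_n(s+1)=1$ when $s+1>m_n$) into $-\{F_0(s+1)-F_0(s)\}/\{\hat F_n(s+1)-\hat F_n(s)\}$. This reproduces the two-term integrand of $Y_n$ in (\ref{def_Y_n2}) in every regime.

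Finally, the decomposition $Y_n=A_n+B_n$ is a one-line algebraic identity. In each of the two quotients in (\ref{def_Y_n2}) I would write the numerator increments of $F_0$ as the corresponding increments of $\hat F_n$ minus the differences $\hat F_n-F_0$ at the three points $s-1,s,s+1$; the two resulting constant ``$1$'' terms cancel, the coefficients of $\hat F_n(s)-F_0(s)$ assemble into the integrand of $A_n$, and the remaining $\hat F_n(s-1)-F_0(s-1)$ and $\hat F_n(s+1)-F_0(s+1)$ terms assemble into the integrand of $B_n$ in (\ref{B_n(t)}). I expect the main obstacle to be the indicator bookkeeping of the first step: checking that the difference of the box indicator in the second term of (\ref{process_W}) combines with the cumulative third term to produce exactly the coefficients in $X_n$, and handling the boundary cases where the shell meets $s=1$ or $s=m_n$, where one must invoke (\ref{upper_values})--(\ref{lower_values}) and the support endpoint $M$. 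By contrast, the measure split, the regime-by-regime collapse to $Y_n$, and the algebraic $A_n+B_n$ split are routine.
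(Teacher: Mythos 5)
Your strategy---difference the four indicator sets in (\ref{process_W}), split $d\Q_n=d(\Q_n-Q_0)+dQ_0$ with $dQ_0(s)=\{F_0(s)-F_0(s-1)\}\,ds$, shift the two integrals over $\{t_0<s-1\le t_0+n^{-1/3}t\}$ by one unit, and then regroup to obtain $A_n+B_n$---is the natural route, and the paper offers no proof of this lemma to compare against. Your second and third steps are correct as described: the merging of the $\d_1,\d_2,\d_3$ regimes via (\ref{upper_values})--(\ref{lower_values}) and $F_0(s-1)=0$ for $s\le1$, $\d_2(s+1)+\d_3(s+1)=1$, does reproduce the two-quotient integrand of (\ref{def_Y_n2}), and the cancellation of the two constant terms together with the regrouping of $\hat F_n-F_0$ at $s-1$, $s$, $s+1$ gives exactly $A_n$ and $B_n$.

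The gap sits precisely in the step you defer. Taking (\ref{process_W}) literally, the bookkeeping does \emph{not} collapse to the integrand of (\ref{def_X_n}). For fixed $s$ the box indicator $\{0<s-1\le t<s\le m_n\}$, as a function of $t$, switches on at $t=s-1$ and switches off again at $t=s$; its increment over the move from $t_0$ to $t_0+n^{-1/3}t$ is therefore $1\{t_0<s-1\le t_0+n^{-1/3}t\}-1\{t_0<s\le t_0+n^{-1/3}t\}$ on the set $\d_2(s)=1$, so that the second term of (\ref{process_W}) contributes
\begin{align*}
-\int_{t_0<s-1\le t_0+n^{-1/3}t}\frac{\d_2(s)}{\hat F_n(s)-\hat F_n(s-1)}\,d\Q_n(s)
+\int_{t_0<s\le t_0+n^{-1/3}t}\frac{\d_2(s)}{\hat F_n(s)-\hat F_n(s-1)}\,d\Q_n(s),
\end{align*}
while the third term of (\ref{process_W}) contributes the second of these integrals once more. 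The difference $W_{n,\hat F_n}(t_0+n^{-1/3}t)-W_{n,\hat F_n}(t_0)$ then equals $X_n(t)+Y_n(t)$ \emph{plus} an extra copy of $\int_{t_0<s\le t_0+n^{-1/3}t}\d_2(s)\{\hat F_n(s)-\hat F_n(s-1)\}^{-1}d\Q_n(s)$, which is of exact order $O_p(n^{-2/3})$ and hence not negligible at the scale of the expansion. The identity of the lemma holds only if the constraint $t<s$ in the second indicator of (\ref{process_W}) is dropped, i.e.\ if the second and third terms are read as the negative part $-1\{s-1\le t\}$ and positive part $+1\{s\le t\}$ of the cumulated derivative $\bigl(1\{s\le t\}-1\{s-1\le t\}\bigr)/\{F(s)-F(s-1)\}$ coming from the middle piece of (\ref{emp_loglik}); under that reading the box term contributes only the first integral above and your step 1 goes through verbatim. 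So you have located the right pressure point, but the verification you promise there fails for the definition as printed; the proof must either adopt this corrected reading of (\ref{process_W}) or explain why the apparent extra term is absent.
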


The crucial difference between the mixed model and the fixed model is the term $B_n(t)$ in the expansion of $W_{n,\hat F_n}$ in Lemma \ref{lemma:W_n-expansion}. In the mixed model the corresponding term
\begin{align}
\label{B_n(t)_mixed}
&\tilde B_n(t)\nonumber\\
&=\int_{s\in[t_0,t_0+n^{-1/3}t)}\int e^{-1}\left\{\frac{\hat F_n(s-e)-F_0(s-e)}{\hat F_n(s)-\hat F_n(s-e)}
+\frac{\hat F_n(s+e)-F_0(s+e)}{\hat F_n(s+e)-\hat F_n(s)}\right\}\,dF_E(e)\,ds
\end{align}
is shown in \cite{piet_EJS:24} to be of order $O_p(n^{-5/6})$ in the leading expansion, but (\ref{B_n(t)}) is of order $O_p(n^{-2/3})$.
In fact, the terms $A_n(t)$ and $B_n(t)$ are both of order $O_p(n^{-2/3})$ in the fixed model, whereas the corresponding terms are of order $O_p(n^{-2/3})$ and $O_p(n^{-5/6})$, respectively, in the mixed model, see the proof of Lemma 9.8 and Lemma 9.9 in \cite{piet_EJS:24}, respectively. Integration w.r.t.\ $dF_E$, where $F_E$ is absolutely continuous, causes the lower order of the term. Showing that (\ref{B_n(t)_mixed}) is of order $O_p(n^{-5/6})$, however, is not at all easy and is in fact the main effort in \cite{piet_EJS:24}. We need to apply the smooth functional theory here, as discussed in Section \ref{sec:examples}.

We have the following conjecture on the asymptotic behavior of $\hat F_n$ if  $V_i$ is Uniform$(0,1)$.

\begin{conjecture}
\label{conjecture:limit_df}
Let $F_0$ have a continuous positive density $f_0$ on $(0,M)$, where $M>0$, and let  $t_0\in(0,M)$. Let $\hat F_n$ be the nonparametric MLE of $F_0$. Then, for $t_0\in(a,b)$:
\begin{align}
\label{local_limit_resultConject}
n^{1/3}\{\hat F_n(t_0)-F_0(t_0)\}/\left(4f_0(t_0)F_0(t_0)(1-F_0(t_0))\}\right)^{1/3}\stackrel{d}\longrightarrow \text{\rm argmin}_{t\in\R}\left\{W(t)+t^2\right\},
\end{align}
where $W$ is two-sided Brownian motion on $\R$, originating from zero.
\end{conjecture}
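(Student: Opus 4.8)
The plan is to carry out the Groeneboom cube-root program on the localised process, starting from the decomposition already prepared in Lemmas \ref{lemma:BM_part} and \ref{lemma:W_n-expansion}. Rescaling the characterization of Lemma \ref{lemma:char_MLE}, one studies the location of the minimum of $t\mapsto n^{2/3}\{W_{n,\hat F_n}(t_0+n^{-1/3}t)-W_{n,\hat F_n}(t_0)\}$ and converts it, through the usual switching (inverse) relation together with the factor $f_0(t_0)$ relating the space scale to the probability scale, into $n^{1/3}\{\hat F_n(t_0)-F_0(t_0)\}$. By Lemma \ref{lemma:BM_part} the centred empirical part $n^{2/3}X_n$ converges to $\sqrt{c}\,W(t)$ with $c$ equal to (\ref{eq:cE1}); the self-induced term $A_n$, in which the bracketed factor converges to the same $c$, supplies through the standard self-consistency argument a parabolic drift $\tfrac12 c f_0(t_0)\,t^2$. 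Were the remaining term $B_n$ negligible --- as its mixed-model analogue (\ref{B_n(t)_mixed}) is --- the switching relation would return exactly the constant $(4f_0(t_0)/c)^{1/3}$ built from (\ref{eq:cE1}), i.e. the naive ``point mass at $1$'' plug-in that we do \emph{not} expect to be correct.

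The whole difficulty is therefore concentrated in the term $B_n$ of (\ref{B_n(t)}), which in the fixed model is of the \emph{same} order $O_p(n^{-2/3})$ as $A_n$ and cannot be discarded. Over the shrinking window $[t_0,t_0+n^{-1/3}t)$ the numerators $\hat F_n(s\pm1)-F_0(s\pm1)$ are essentially frozen at their values at $t_0\pm1$, while the denominators converge, so that $n^{2/3}B_n(t)$ is to leading order \emph{linear} in $t$, with random slope
\[
\frac{Z_{-1}}{F_0(t_0)-F_0(t_0-1)}+\frac{Z_{+1}}{F_0(t_0+1)-F_0(t_0)},
\]
where $Z_{\pm1}$ are the scaled limiting fluctuations of $\hat F_n$ at the neighbouring lattice points $t_0\pm1$. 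Completing the square shows that such a term moves the location of the minimum, so that the limit of $\hat F_n(t_0)$ becomes \emph{coupled} to those of $\hat F_n(t_0\pm1)$, and so on along the whole lattice $\{t_0+j:t_0+j\in(0,M)\}$. The plan is to make this coupling precise: set up the full system of localised argmin problems indexed by the lattice, each driven by its own two-sided Brownian motion but tied to its neighbours through the $B_n$-slopes, and solve it as a single self-consistent fixed point.

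The heart of the matter is to show that resolving this coupled system collapses the neighbour-interval contributions and effectively replaces $c$ of (\ref{eq:cE1}) by $1/[F_0(t_0)\{1-F_0(t_0)\}]$ of (\ref{eq:cE2}) in \emph{both} the noise and the curvature, leaving the marginal law at $t_0$ a clean $\text{\rm argmin}\{W(t)+t^2\}$. The guiding heuristic is the exact telescoping exploited in Examples \ref{example:density} and \ref{example:df}: there the sum $\{F_0(x+1)-F_0(x)\}+\dots+\{1-F_0(x+m-1)\}=1-F_0(x)$ collapses all neighbour-interval variances into the single factor $F_0(t)\{1-F_0(t)\}$ in $\tilde\s_t^2$, \emph{irrespective of $M$}; I would try to reproduce the same cancellation at the level of the coupled fluctuation process. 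A complementary route is to sidestep the pointwise problem and instead obtain the local constant as the vanishing-bandwidth limit of the smooth-functional variances of Theorem \ref{theorem:limit_df}.

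The main obstacle is exactly this coupled cube-root system. In the mixed model the integration against an absolutely continuous $dF_E$ smooths (\ref{B_n(t)_mixed}) down to order $O_p(n^{-5/6})$, which decouples the lattice points and leaves the per-point constant (\ref{eq:cE1}); in the fixed model there is no such smoothing, so one must control a non-Gaussian, self-induced system of infinitely many mutually dependent argmin processes --- establishing its joint tightness, identifying the joint limit, and extracting the marginal. It is not even a priori clear that this marginal remains a clean $\text{\rm argmin}\{W(t)+t^2\}$ rather than one perturbed by a dependent shift, and showing that the bookkeeping really yields (\ref{eq:cE2}) rather than (\ref{eq:cE1}) is the step I expect to be genuinely hard. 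This is precisely why the present paper supports the statement only by simulation and the above heuristics, and phrases it as Conjecture \ref{conjecture:limit_df}.
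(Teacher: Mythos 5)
This statement is a conjecture, and the paper offers no proof of it --- only simulation evidence and the heuristic that the term $B_n$ in (\ref{B_n(t)}) is $O_p(n^{-2/3})$ in the fixed model (unlike its mixed-model analogue (\ref{B_n(t)_mixed})) together with the telescoping-sum cancellation seen for the smooth functionals of Examples \ref{example:density} and \ref{example:df}. Your proposal reconstructs exactly this reasoning --- the localisation of Lemma \ref{lemma:char_MLE} via Lemmas \ref{lemma:BM_part} and \ref{lemma:W_n-expansion}, the identification of $B_n$ as the obstruction, and the telescoping heuristic for why (\ref{eq:cE2}) rather than (\ref{eq:cE1}) should appear --- and correctly stops short of a proof at the same point the paper does; your additional framing of the problem as a lattice-coupled system of argmin processes is a reasonable sharpening of the paper's heuristic, but it is not carried out here and is not in the paper either, so no claim is established beyond what the paper itself asserts.
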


\begin{figure}[!ht]
 		\begin{subfigure}[b]{0.4\textwidth}
 			\includegraphics[width=\textwidth]{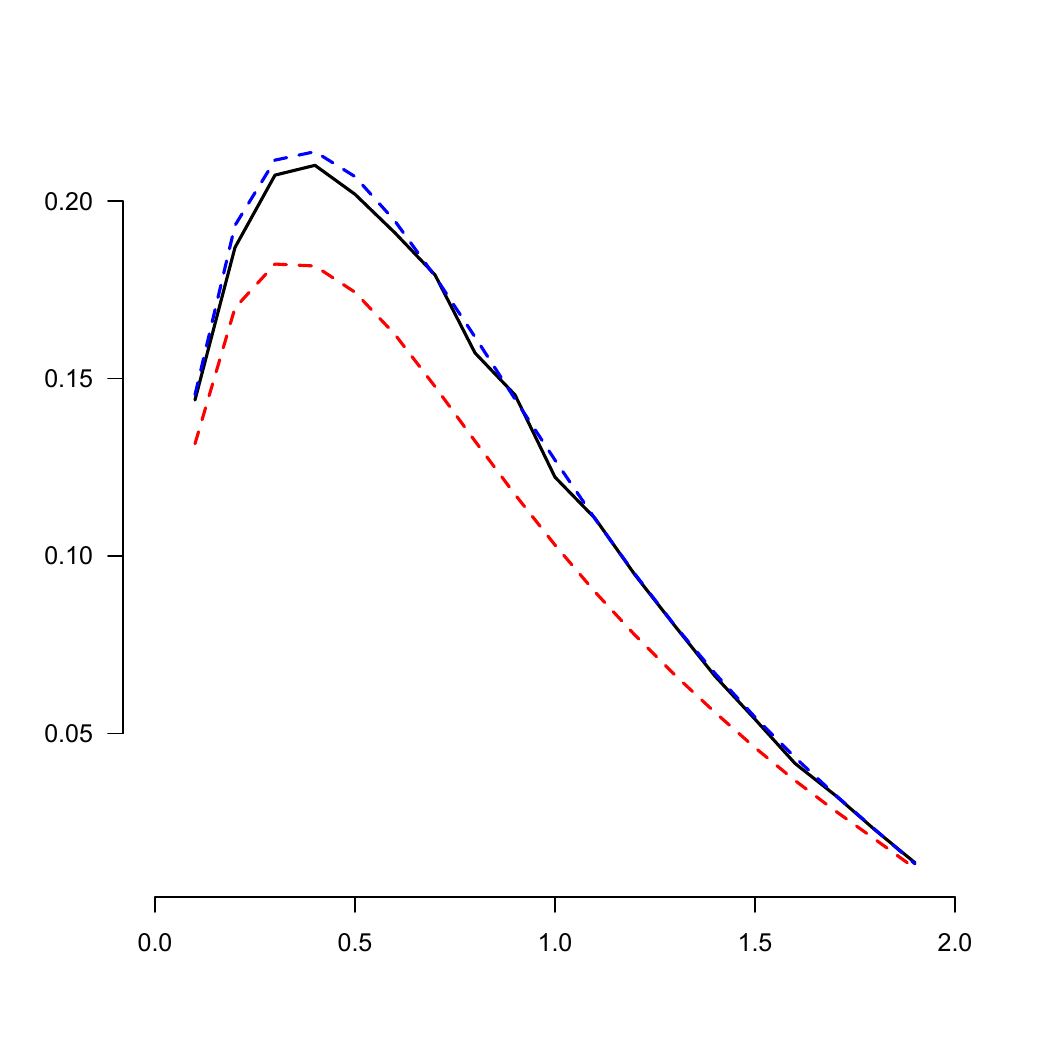}
 			\caption{}
 			\label{fig:fixed_10,000exp}
 		\end{subfigure}
 		\begin{subfigure}[b]{0.4\textwidth}
 			\includegraphics[width=\textwidth]{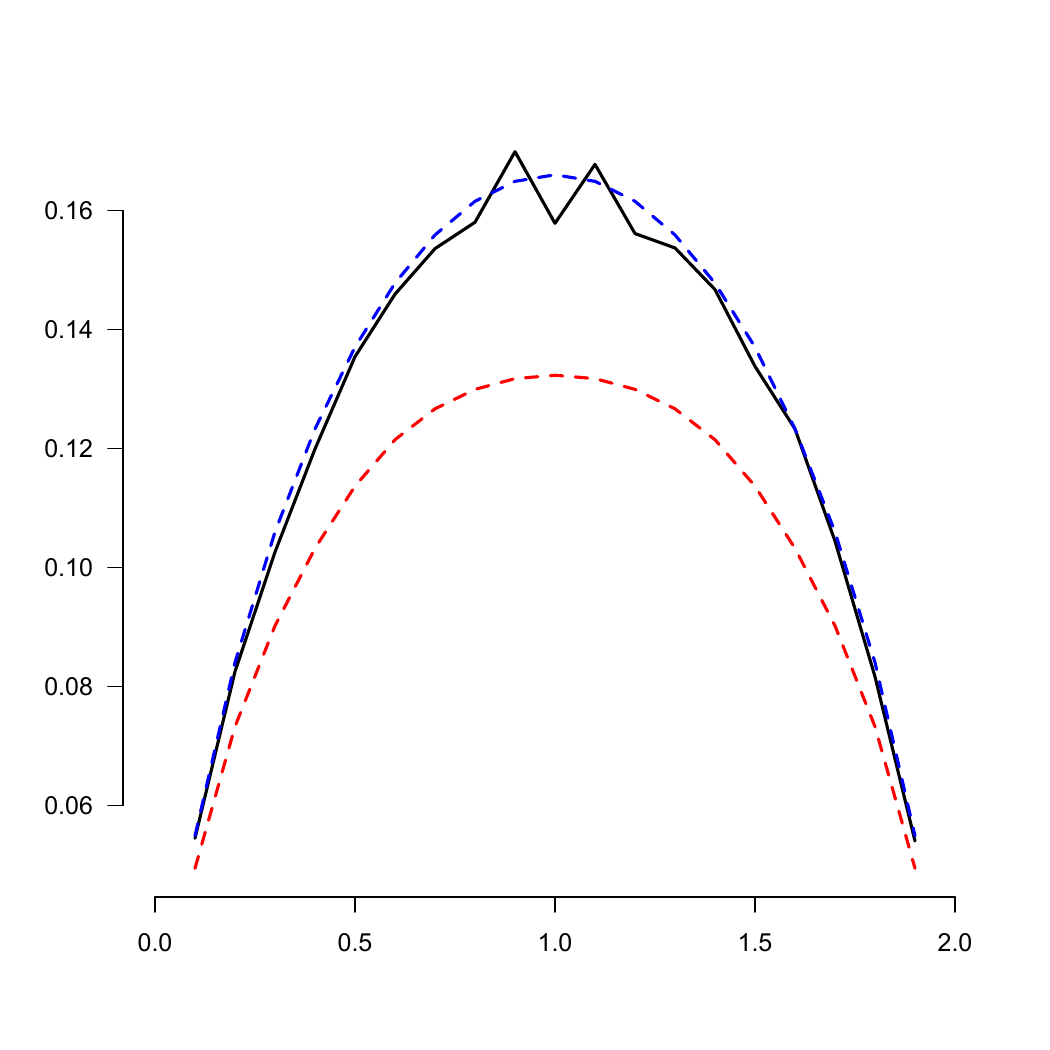}
 			\caption{}
 			\label{fig:fixed_10,000_uniform}
 		\end{subfigure}
 		\caption{Simulated variances, times $n^{2/3}$, for the fixed model of $\hat F_n(t_i)$, for $t_i=0.1,0.2,\dots,1.9$ (blue solid curve, linearly interpolated between values at the $t_i$), compared with the asymptotic values of Conjecture \ref{conjecture:limit_df} (blue, dashed). The simulated variances are based on $10,000$ simulations of samples of size $n=10,000$ for (a) $F_0$ the truncated exponential distribution function on $[0,2]$ and (b) $F_0$ the uniform distribution function on $[0,2]$. The red dashed curves are the corresponding asymptotic variance curves of Theorem 4.1 in \cite{piet_EJS:24} for the mixed model and $E_i$ uniform on $[0.5,1.5]$.}
 	\end{figure}

So in this case we expect the MLE to have exactly the same limit behavior as in the case that the support of the distribution is contained in $[0,1]$ (see Theorem \ref{th:local_limit}). The reason we believe the conjecture might be true partly relies on simulations and partly on the behavior of the smooth functionals in Section \ref{sec:examples}, where also the $F(1-F)$ ``Bernoulli factor" enters into the variance. If the conjecture is true, we think it enters via telescoping sums, as we demonstrated for the density estimator, based on the MLE, but we have not been able to prove this.

We finally show in Figure \ref{figure:2conjectures} a picture of the variance curve for 10,000 samples of size $n=10,000$, where the blue dashed curve shows the theoretical curve of Conjecture \ref{conjecture:limit_df} and the purple dotted curve the theoretical curve if we would apply Theorem 4.1 in \cite{piet_EJS:24} with $E$ degenerate at $1$ (ignoring the conditions of Theorem 4.1 in \cite{piet_EJS:24}).

\begin{figure}[!ht]
 		\begin{subfigure}[b]{0.4\textwidth}
 			\includegraphics[width=\textwidth]{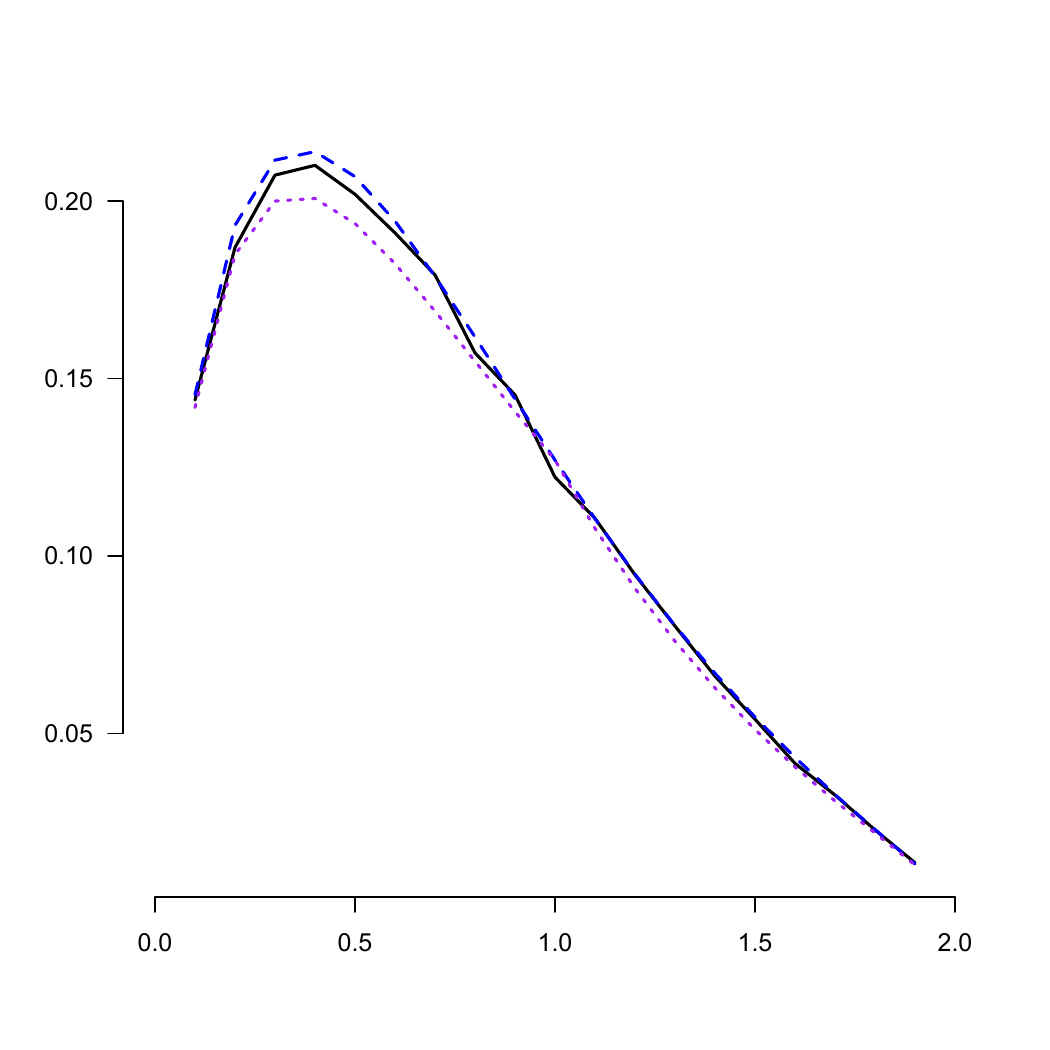}
 			\caption{}
 		\end{subfigure}
 		\begin{subfigure}[b]{0.4\textwidth}
 			\includegraphics[width=\textwidth]{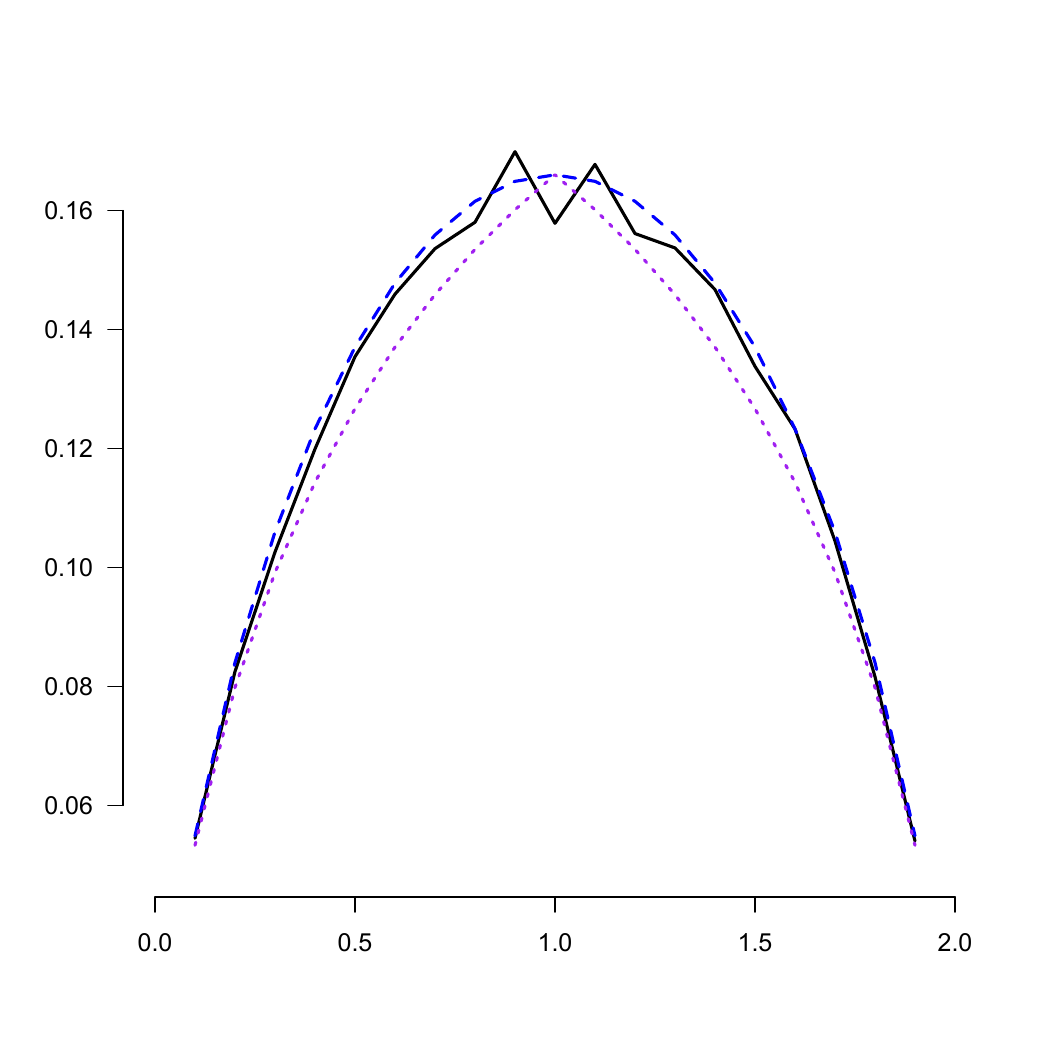}
 			\caption{}
 		\end{subfigure}
 		\caption{Simulated variances, times $n^{2/3}$, for the fixed model of $\hat F_n(t_i)$, for $t_i=0.1,0.2,\dots,1.9$ (black solid curve, linearly interpolated between values at the $t_i$), compared with the asymptotic values of Conjecture \ref{conjecture:limit_df} (blue, dashed) and the conjecture that would follow from Theorem 4.1 in \cite{piet_EJS:24}, ignoring the conditions (purple, dotted), for (a) $F_0$ the truncated exponential distribution function on $[0,2]$ and (b) $F_0$ the uniform distribution function on $[0,2]$.
 The simulated variances are based on $10,000$ simulations of samples of size $n=10,000$.}
 \label{figure:2conjectures}
 	\end{figure}
	
	The ``dip'' of the empirical variance curves at the point $1$ if $F_0$ is the uniform distribution function on $[0,2]$, which was also visible for the mixed model in Figure \ref{figure:variances_incub}, is remarkable, but we do not have an explanation for it. Note that the theoretical curves touch at this point, because the theoretical variance at the point $t=1$ contains $F_0(t)\{1-F_0(t\}$ for both conjectures there.

\section{Concluding remarks}
\label{section:conclusion}
We showed that the asymptotic distribution of the nonparametric maximum likelihood estimator (MLE) in the uniform deconvolution model can de derived from a corresponding result for the current status model in the case that the support of the distribution is contained in the unit interval. If the support of the unknown distribution is not contained in the unit interval, the asymptotic distribution is unknown. But also in this setting, the model is shown to be related to an interval censoring model, but now case $m$ for $m\ge 2$. 

The asymptotics in the mixed uniform deconvolution model (where the length of the support of the uniform variable is random) was studied in \cite{piet_EJS:24} under a smoothness condition on the distribution of the interval length. In that setting there is  no distinction depending on the support of the distribution corresponding to $F_0$. In case $F_0(1)=1$, the statement of the theorem reduces to that for the current status model if the (non-smooth) degenerate distribution is substituted. A natural question is whether the same can be done in case $F_0(1)<1$. 

Simulations indicate this cannot be done. Moreover, an important term in the asymptotic analysis of the nonparametric MLE in the mixed uniform deconvolution problem turns out to behave essentially differently, depending on whether $F_0(1)=1$ or $F_0(1)<1$. This discrepancy is explained from  smooth functional theory, the strength of which is also demonstrated using more easily understandable functionals in Section \ref{sec:examples}.

{\tt R} scripts for computing the MLE and producing the pictures in this paper are available in \cite{Rscripts_GitHub:25}.

\section{Appendix}
\label{section:Appendix}

\subsection{Score equations}
\label{subsection:score_equations}
As explained in the Appendix of \cite{piet:21}, the theory of the estimation of smooth functionals is based on certain score equations.
We define the score function $\th_F$ by:
\begin{align*}
\th_F(s)=E\left\{a(X)|S=s\right\}=\frac{\int_{x\in(s-1,s]} a(x)\,dF(x)}{F(s)-F(s-1)}\,, 
\end{align*}
(compare to (A.1) in \cite{piet:21}). This is the conditional expectation of $a(X)$ in the ``hidden'' space of the variable of interest, given our observation $S$. Note that we changed the notation somewhat w.r.t.\ \cite{piet:21}, and denote the distribution function of the incubation time by $F$ instead of $G$.

Defining
\begin{align*}
\f_F(t)=\int_{x\in[0,t]}a(x)\,dF(x),
\end{align*}
we get the following representation for the score function, conditioned on $X=x$:
\begin{align}
\label{original_representation}
E\{\th_F(S)|X=x\}
=\int_{s\in(x,x+1]}\frac{\f_F(s)-\f_F(s-1)}{F(s)-F(s-1)}\,ds.
\end{align}

\vspace{0.4cm}
By differentiation w.r.t.\ $x$ we get the following  equation, with on the right the derivative w.r.t.\ $x$ of the functional
 we want to estimate, denoted by $\psi$:
\begin{align}
\label{phi-eq}
\frac{\f_F(x+1)-\f_F(x)}{F(x+1)-F(x)}-\frac{\f_F(x)-\f_F(x-1)}{F(x)-F(x-1)}
=\psi(x),\qquad x\in[0,M].
\end{align}
Here and in the sequel, we assume that distribution functions are right-continuous.

We have the following lemma.

\begin{lemma}
\label{lemma:solution_score_eq}
The solution of the system (\ref{phi-eq}) is found in the following way. Let $m=\lceil M\rceil$ and let $\th_F(x)$ be given by
\begin{align*}
\th_F(x)=-\sum_{i=0}^{m-1}\{1-F(x+i)\}\psi(x+i),\qquad x\in[0,1].
\end{align*}
and $\th_F(x+i)=\th_F(x+i-1)+\psi(x+i-1)$, $i=1\dots,m+1$.

Then $\f_F$ is given by
\begin{align*}
\f_F(x)=F(x)\th_F(x),\qquad x\in[0,1],
\end{align*}
and
\begin{align*}
\f_F(x+i)-\f_F(x+i-1)=\{F(x+i)-\{F(x+i-1)\}\th_F(x+i),\qquad i=1,\dots,m.
\end{align*}
\end{lemma}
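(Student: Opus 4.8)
The plan is to read equation (\ref{phi-eq}) through the definitions of $\th_F$ and $\f_F$ and then to reconstruct the closed form by a summation-by-parts argument. First I would note that, since $\f_F(t)=\int_{[0,t]}a\,dF$ gives $\f_F(s)-\f_F(s-1)=\int_{(s-1,s]}a\,dF$, the defining relation for the score function reads
\[
\th_F(s)=\frac{\f_F(s)-\f_F(s-1)}{F(s)-F(s-1)}.
\]
Hence the left-hand side of (\ref{phi-eq}) is exactly $\th_F(x+1)-\th_F(x)$, and the system is equivalent to the first-order recursion $\th_F(x+1)-\th_F(x)=\psi(x)$. This immediately gives the stated recursion $\th_F(x+i)=\th_F(x+i-1)+\psi(x+i-1)$ and, by telescoping, the explicit form $\th_F(x+i)=\th_F(x)+\sum_{j=0}^{i-1}\psi(x+j)$ for $x\in[0,1]$. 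The only quantity left open is the value of $\th_F$ on the base interval $[0,1]$, which is precisely what the closed-form expression must supply.

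Next I would identify $\f_F$ in terms of $\th_F$. For $x\in[0,1]$ one has $F(x-1)=0$ and $\f_F(x-1)=0$, so the ratio above collapses to $\th_F(x)=\f_F(x)/F(x)$, i.e.\ $\f_F(x)=F(x)\th_F(x)$; the increment identity $\f_F(x+i)-\f_F(x+i-1)=\{F(x+i)-F(x+i-1)\}\th_F(x+i)$ is just that same relation evaluated at $s=x+i$. These are the two $\f_F$-formulas in the statement, and together they rebuild $\f_F$ on all of $[0,M]$ from $\th_F$.

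To extract the closed form on $[0,1]$ I would sum the increments from $x$ to $x+m$, obtaining
\[
\f_F(x+m)=F(x)\th_F(x)+\sum_{i=1}^m\{F(x+i)-F(x+i-1)\}\th_F(x+i).
\]
Substituting the telescoped expression for $\th_F(x+i)$ and collecting the coefficient of $\th_F(x)$, the $F$-differences telescope to $F(x+m)=1$ (using $x+m\ge m\ge M$ and $F(M)=1$), so that
\[
\f_F(x+m)=\th_F(x)+\sum_{i=1}^m\{F(x+i)-F(x+i-1)\}\sum_{j=0}^{i-1}\psi(x+j).
\]
Reversing the order of summation turns the inner $F$-differences into $\sum_{i=j+1}^m\{F(x+i)-F(x+i-1)\}=F(x+m)-F(x+j)=1-F(x+j)$, so the double sum equals $\sum_{j=0}^{m-1}\{1-F(x+j)\}\psi(x+j)$. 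The claimed identity $\th_F(x)=-\sum_{i=0}^{m-1}\{1-F(x+i)\}\psi(x+i)$ then follows once $\f_F(x+m)=0$.

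The main obstacle, and in fact the step that truly selects the solution, is justifying the boundary normalization $\f_F(x+m)=\int a\,dF=0$. The recursion alone leaves $\th_F$ on $[0,1]$ entirely free, and $\f_F(x+m)$ is a fixed constant (independent of $x$) precisely because $x+m\ge M$ forces $\f_F(x+m)=\int_0^M a\,dF$; setting this to zero is the requirement that the influence function $a$ be centered, $E\{a(X)\}=0$, which closes the otherwise underdetermined system. I would flag that the remaining manipulations (the telescoping and the reversal of the order of summation) are routine, and finish by checking the index bookkeeping — that the recursion reaches the top interval and that $\psi$ is used only on $[0,M]$ — so that the stated ranges of $i$ are exactly the ones needed.
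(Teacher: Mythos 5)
Your proof is correct, and since the paper states Lemma \ref{lemma:solution_score_eq} without any proof, there is nothing to diverge from: your route (rewriting the left side of (\ref{phi-eq}) as $\th_F(x+1)-\th_F(x)$ via $\th_F(s)=\{\f_F(s)-\f_F(s-1)\}/\{F(s)-F(s-1)\}$, telescoping, and reversing the order of summation to get the closed form on $[0,1]$) is evidently the intended one; it reproduces exactly (\ref{score_function_F}) in Example \ref{example:mean} with $\psi\equiv1$ and $m=2$. The one substantive point you raise is also the right one to flag: the recursion alone leaves $\th_F$ on $[0,1]$ undetermined, and the stated formula is equivalent to the normalization $\f_F(x+m)=\int_{[0,M]}a\,dF=0$, i.e.\ to the influence function $a$ being centered. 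This hypothesis is nowhere stated in the lemma but is implicit in the smooth-functional framework the paper invokes (the efficient influence function in \cite{geer:00} is centered, and for the mean functional one takes $a(x)=x-EX$); making it explicit, as you do, genuinely completes the statement rather than merely verifying it. Your index bookkeeping is also consistent with the lemma: the $\f_F$-identities are needed only for $i=1,\dots,m$ (the hidden space $[0,M]$), while the recursion for $\th_F$ extends to $i=m+1$ to cover the observation space.
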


\begin{remark}
{\rm
This is in accordance with Example 11.2.3e on p.\ 230 of \cite{geer:00}, where $b_{F_0}=\th_{F_0}$. It is also in line with formula (12) in Theorem 2 in \cite{GroJoUnifDec:03}.
}
\end{remark}

\bibliographystyle{imsart-nameyear}
\bibliography{cupbook}

\end{document}